\documentclass[a4paper,12pt%
]{amsart}
\pdfoutput=1
\usepackage{amsmath, amssymb, amsthm, amscd}
\usepackage{graphicx}
\usepackage{enumerate}
\usepackage[all,cmtip]{xy}

\newcounter{theorem}
\newtheorem{thm}[theorem]{Theorem}
\newtheorem{lemma}[theorem]{Lemma}
\newtheorem{prop}[theorem]{Proposition}

\newtheorem{defn}[theorem]{Definition}

\theoremstyle{remark}
\newtheorem*{remark*}{Remark}

\newtheorem{example}[theorem]{Example}

\numberwithin{equation}{section}
\numberwithin{theorem}{section}

\newcommand{\defemph}{\emph}

\newcommand{\Q}{\mathbb{Q}}
\newcommand{\Z}{\mathbb{Z}}
\newcommand{\C}{\mathbb{C}}
\newcommand{\N}{\mathbb{N}}

\title[$p$-adic functionals]{$p$-adic functionals on finite-rank torsion-free abelian groups}

\author{Gregory R. Maloney}
\address{Newcastle University}
\subjclass[2010]{Primary: 20K15, 
20K45 
Secondary: 20E18 
}
\keywords{Torsion-free abelian group, stationary, $p$-adic, functional}
\date{\today}

\begin{document}

\begin{abstract}
Let $p$ be a prime and $G$ be a torsion-free abelian group.  
A homomorphism from $G$ to the $p$-adic integers is called a $p$-adic functional on $G$.  
If $G$ has finite rank, then $G$ can be represented as an inductive limit of an inductive sequence of free abelian groups of the same rank, and the group of all $p$-adic functionals on $G$ is described in terms of this inductive sequence.  
If this inductive sequence is stationary---i.e., if the homomorphism is the same at every stage---then the group of $p$-adic functionals is described in particularly simple terms, as a right-submodule that is invariant under the module homomorphism that this group homomorphism induces.  
It is shown that the class consisting of all such stationary inductive limits is closed under quasi-isomorphism; this strengthens a previous classification result of Dugas.  
\end{abstract}

\maketitle

\section{Introduction}\label{SEC:intro}

Let $p$ be a prime and let $G$ be a torsion-free abelian group.  
The group of \defemph{$p$-adic functionals} on $G$ is the group $Hom (G,\Z_p)$, denoted here by $G^{*p}$; these functionals were introduced in \cite{M:functionals}, where they were studied using the techniques of functional analysis.  
The first main result of that work is that $G^{*p*p}$, the $p$-adic double dual of $G$, is the same as the pro-$p$ completion of $G$.  
The second main result is a classification result.  
This classification uses a sequence of matrices, indexed by prime numbers $p$, such that the $p$th matrix has entries in $\Z_p$.  
These matrices are closely related to the matrices of Kurosch \cite{K:matrices}, but, in general, are smaller, although they still contain enough information to be a complete isomorphism invariant of $G$.  

The goal of this work is to study $G^{*p}$ in the case that $G$ has finite rank (here the \defemph{rank} of a torsion-free abelian group refers to its torsion-free rank, that is, the size of any maximal linearly independent subset).  
In this case, $G$ is isomorphic to the inductive limit of an inductive sequence of free abelian groups, all with the same rank, and $G^{*p}$ has a simple description in terms of the maps in this inductive sequence.  
This is the content of Section \ref{SEC:inductive-limit}.  

Section \ref{SEC:tensor} explains the relationship between the $p$-adic double dual, $G^{*p*p}$, and the tensor product $G\otimes \Z_p$, if $G$ has finite rank.  
In particular, $G^{*p*p}$ is isomorphic to the quotient of $G\otimes \Z_p$ by its divisible subgroup (in a way that preserves the image of $G$ in these groups).  
This clarifies the relationship between the classification result of \cite{M:functionals} and the Kurosch matrices \cite{K:matrices}, because the Kurosch matrices express the elements of a maximal independent subset of $G$ in terms of a $\Q_p$-basis for the divisible subgroup of $G\otimes \Z_p$ and a $\Z_p$-basis for the quotient.  

Section \ref{SEC:stationary-states} deals with the special case where $G$ is \defemph{stationary}, that is, $G$ is isomorphic to an inductive limit of an inductive sequence of finite-rank free abelian groups in which the homomorphisms are the same at each stage.  
In this case, the group of $p$-adic functionals has a particularly simple description, which is expressible in terms of basic linear algebra applied to the homomorphism from the inductive sequence. 
This is the content of Theorem \ref{THM:stationary-states}, which is the main result of Section \ref{SEC:stationary-states}.  

Section \ref{SEC:isomorphism} deals with quasi-isomorphism of torsion-free abelian groups.  
The main result is Theorem \ref{THM:quasi-isomorphism}, which says that the class of stationary torsion-free abelian groups is closed under quasi-isomorphism.  
This makes it possible to strengthen a previous result of Dugas \cite{D:stationary}, who gave an explicit description of what a finite-rank torsion-free abelian group $G$ must look like, up to quasi-isomorphism (Definition \ref{DEF:quasi-isomorphic}), if it is quasi-isomorphic to a stationary group.  
Theorem \ref{THM:quasi-isomorphism} implies that this description applies to $G$ if and only if $G$ itself is stationary.  

\subsection{Notation}\label{SUBSEC:notation}

Let $p$ be a prime.  
$\Z_p$ denotes the ring of $p$-adic integers and $\Q_p$ denotes its field of fractions.  
The $p$-adic absolute value of an element $\alpha$ of $\Q_p$ or $\Z_p$ is denoted by $|\alpha |_p$.  
If $n\in\N$ and $R$ is a ring, then $R^n$ denotes the module consisting of $n\times 1$ columns with entries in $R$, while $(R^n)^*$ denotes its dual consisting of $1\times n$ rows.  
If $m,n\in\N$, then $M_{m\times n}(R)$ denotes the module of $m\times n$ matrices over $R$.  
For $r\in R^n$, $r^t$ denotes its transpose in $(R^n)^*$.  
For $A\in M_{m\times n}(R)$, $A^t$ denotes its transpose in $M_{n\times m}(R)$.  
For $v = (v_1,\ldots, v_n)^t \in \Z_p^n$, the norm $\| v\|_p $ is defined by $\| v\|_p := \max_{1\leq i\leq n} \{ \| v_i\|_p\}$.

\section{The state space of an inductive limit}\label{SEC:inductive-limit}

It is well known that every rank-$r$ torsion-free abelian group can be written as an increasing union of a chain of subgroups, each of which is free of rank $r$---see, for instance, \cite{K:matrices} or \cite{D:stationary}.  
This implies that every rank-$r$ torsion-free abelian group is isomorphic to the inductive limit of an inductive system of the form 

\centerline{
\xymatrix{
\Z^r \ar@{>}^{A_1}[r] & \Z^r \ar@{>}^{A_2}[r] & \Z^r \ar@{>}^{A_3}[r] & \Z^r \ar@{>}^{A_4}[r] & \Z^r \ar@{>}^{A_5}[r] & \cdots ;\\
}
}

let us denote this inductive limit by $\varinjlim (\Z^r; A_i)$.  

We may also suppose that $A_i: \Z^r\to \Z^r$ is an injective endomorphism of $\Z^r$.  
In this case, the endomorphisms $A_i$ induce automorphisms of $\Q^r$, and it is not difficult to verify that the homomorphisms $A_1^{-1}\cdots A_n^{-1} : \Z^r \to \Q^r$ form a compatible family, and the inductive limit is isomorphic to the union of their images: 
\[
\varinjlim (\Z^r; A_i) \cong \bigcup_{n=0}^\infty A_1^{-1}\cdots A_n^{-1} (\Z^r) \subseteq \Q^r.  
\]

The main result of this section describes the set of $p$-adic functionals on such an inductive limit.  
It refers to the endomorphisms $A_i$ from a variety of points of view, which should be clarified.  

Given a basis of $\Z^r$, every endomorphism $A$ of $\Z^r$ can be represented as left multiplication by an $r\times r$ matrix with integer entries.  
Let us also denote this matrix by $A$.  
The endomorphism $A$ induces an endomorphism on $\Z_p^r$, the $p$-adic completion of $\Z^r$; this endomorphism can be represented as left multiplication by the same matrix $A$.  
Then $A$ also induces endomorphisms of $(\Z^r)^*$ and $(\Z_p^r)^*$; viewing these as groups consisting of rows, the induced endomorphisms correspond to right multiplication by the matrix $A$.  
The following is the main result of this section, and will be familiar to readers who are acquainted with the theory of state spaces of dimension groups \cite{G:book}.  
\begin{prop}\label{PROP:limit-functionals}
For each $i\in\N$, let $A_i: \Z^r\to \Z^r$ be an injective endomorphism of $\Z^r$, and consider the group $G = \bigcup_{n=0}^\infty A_1^{-1}\cdots A_n^{-1} (\Z^r)$.  
Then, for any prime $p$, it is true that $G^{*p} = \bigcap_{n=0}^\infty (\Z_p^r)^* A_n\cdots A_1$.  
\end{prop}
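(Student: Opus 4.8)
The plan is to exploit the fact that $G$ is the increasing union of the free rank-$r$ subgroups $G_n := A_1^{-1}\cdots A_n^{-1}(\Z^r)$ (with $G_0 = \Z^r$), so that a homomorphism $G \to \Z_p$ is precisely a compatible family of homomorphisms $G_n \to \Z_p$; in categorical terms $G^{*p} = \varprojlim G_n^{*p}$ along the restriction maps. First I would record that injectivity of the $A_i$ makes $\{G_n\}$ an increasing chain: since $A_{n+1}(\Z^r) \subseteq \Z^r$ gives $\Z^r \subseteq A_{n+1}^{-1}(\Z^r)$, applying $A_1^{-1}\cdots A_n^{-1}$ yields $G_n \subseteq G_{n+1}$, and $\bigcup_n G_n = G$ by definition.

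The key device is to represent every functional by a single row vector over $\Q_p$. Because $G \supseteq \Z^r$ spans $\Q^r$ over $\Q$, and a $\Q$-basis of $\Q^r$ is automatically a $\Q_p$-basis of $\Q_p^r$, any $\phi \in G^{*p}$ will have a unique $\Q_p$-linear extension to a functional on $\Q_p^r$, i.e.\ a row $y \in (\Q_p^r)^*$ with $\phi(g) = yg$ for $g \in G$. Concretely I would build $y$ stage by stage: restricting $\phi$ to the lattice $G_n = B_n(\Z^r)$, where $B_n := A_1^{-1}\cdots A_n^{-1}$, and extending $\Q_p$-linearly gives a row $y_n \in (\Q_p^r)^*$. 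The requirement that $\phi$ take values in $\Z_p$ on $G_n$ reads $y_n B_n \in (\Z_p^r)^*$, equivalently $y_n \in (\Z_p^r)^* B_n^{-1} = (\Z_p^r)^* A_n \cdots A_1$; this is where the reversal of order and the passage to right multiplication by the $A_i$ enter.

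The crux is then to see that all the $y_n$ coincide. Since $G_{n}$ already contains a $\Q_p$-basis of $\Q_p^r$ and $y_{n+1}$ agrees with $y_n$ on $G_n$ (both compute $\phi$ there), the two rows define the same $\Q_p$-linear functional, so $y_{n+1} = y_n =: y$. Hence $y \in \bigcap_{n=0}^\infty (\Z_p^r)^* A_n \cdots A_1$, and the assignment $\phi \mapsto y$ is the promised map. I expect this gluing step to be the only real subtlety: it is what collapses the inverse limit of the $G_n^{*p}$ onto an honest decreasing intersection inside $(\Q_p^r)^*$ (decreasing because right multiplication by the integral matrix $A_{n+1}$ sends $(\Z_p^r)^*$ into itself).

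Finally I would verify that this assignment is a bijection, completing the identification. It is injective because $y$ recovers $\phi$ via $\phi(g) = yg$ on each $G_n$. For surjectivity, given $y \in \bigcap_n (\Z_p^r)^* A_n \cdots A_1$, the formula $\phi(g) := yg$ defines a homomorphism $G \to \Q_p$; for $g = B_n v \in G_n$ with $v \in \Z^r$ one has $\phi(g) = (y B_n) v \in \Z_p$, because $y \in (\Z_p^r)^* B_n^{-1}$ forces $y B_n \in (\Z_p^r)^*$, so $\phi \in G^{*p}$. The two constructions are mutually inverse, and under the resulting identification of $G^{*p}$ with its image in $(\Q_p^r)^*$ we obtain the stated equality $G^{*p} = \bigcap_{n=0}^\infty (\Z_p^r)^* A_n \cdots A_1$.
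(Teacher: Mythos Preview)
Your argument is correct and follows essentially the same route as the paper: identify a functional with a row vector in $(\Q_p^r)^*$, and translate the condition ``$\phi$ is $\Z_p$-valued on $G_n$'' into the membership $y\in(\Z_p^r)^*A_n\cdots A_1$. The only difference is one of emphasis: the paper simply treats $v\in(\Z_p^r)^*$ as acting on $G$ by $g\mapsto vg$ and checks both inclusions directly, whereas you spend more care on the identification $\phi\mapsto y$ (building $y_n$ on each $G_n$ and showing they coincide because $G_n$ spans $\Q_p^r$). Your extra care makes explicit a point the paper leaves implicit, but the underlying proof is the same.
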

\begin{proof}
For $n\geq 0$, let us write $G^{*p}(n) := (\Z_p^r)^* A_n\cdots A_1$ for brevity.  
It is easy to verify that $G^{*p}(n+1)\subseteq G^{*p}(n)$.  

The conclusion of the proposition is that $G^{*p} = \bigcap_{n=0}^\infty G^{*p}(n)$.  
To see this, suppose $v\in \bigcap_{n=0}^\infty G^{*p}(n)$.  
$v$ is clearly a group homomorphism from $G$ to $\Q_p$; thus it is necessary to show that the image of $v$ is contained in $\Z_p$.  

Pick $g\in G$.  
Then $g = A_1^{-1}\cdots A_N^{-1}g_0$ for some $g_0\in\Z^r$.  
Likewise, $v = v_0 A_N\cdots A_1$ for some $v_0\in (\Z_p^r)^*$.  
Then $vg = v_0A_n\cdots A_1 A_1^{-1}\cdots A_N^{-1} g_0 = v_0g_0 \in \Z_p$.  
As $g\in G$ was arbitrary, this proves that $v\in G^{*p}$.  

Now suppose that $v\in (\Z_p^r)^*\backslash \bigcap_{n=0}^\infty G^{*p}(n)$.  
This means in particular that there exists $N\in \N$ such that $v\notin G^{*p}(N)$---in other words, $vA_1^{-1}\cdots A_N^{-1}\in (\Q_p^r)^*\backslash (\Z_p^r)^*$.  
Say that the $i$th entry of $vA_1^{-1}\cdots A_N^{-1}$ is not in $\Z_p$.  
Then, if $e_i$ is the $i$th standard basis element of $\Z^r\subseteq G$, it is true that $A_1^{-1}\cdots A_N^{-1}e_i\in G$, but $vA_1^{-1}\cdots A_N^{-1}e_i\notin \Z_p$.  
Thus $v\notin G^{*p}$.  
\end{proof}

Note that, according to Proposition \ref{PROP:limit-functionals}, $G^{*p}$ is an intersection of a sequence of nested closed subsets of $\Z_p^r$, all of which contain $0$.  
Thus $G^{*p}$ contains at least the zero row; it was proved in a more general setting in \cite[Corollary 3.15]{M:functionals} that $G^{*p}$ contains non-zero functionals if $G$ is not $p$-divisible.  

\section{Relationship to the tensor product}\label{SEC:tensor}

Let $G$ be a torsion-free abelian group.  
The $p$-adic double dual, $G^{*p*p}$, is studied in \cite[Section 3.4]{M:functionals}.  
There is a natural homomorphism from $G$ to $G^{*p*p}$, and the image of an element $g\in G$ is denoted by $\hat{g}$.  

There is a clear relationship between the $p$-adic double dual $G^{*p*p}$ and the tensor product $G\otimes \Z_p$, at least in the case that $G$ has finite rank.  
A key ingredient in the proof of this is the fact, proved in \cite[Theorem 4.3]{M:functionals}, that $G^{*p*p}$ is the pro-$p$ completion \cite{RZ:profinite} of $G$.  
Thus, in particular, $G^{*p*p}$ satisfies the universal property of the pro-$p$ completion, namely, that any group homomorphism from $G$ to a pro-$p$ group factors through $G^{*p*p}$ (and the homomorphism from $G^{*p*p}$ is continuous).  

The other main ingredient is the fact, proved in \cite{F:book}, that $G\otimes \Z_p$ is a direct sum of copies of $\Q_p$ and $\Z_p$, with the total number of summands equal to the rank of $G$.  
The divisible subgroup of $G\otimes \Z_p$ comprises the $\Q_p$ summands; the result here is that the quotient of $G\otimes \Z_p$ by this divisible subgroup is isomorphic to $G^{*p*p}$.  
\begin{thm}\label{THM:tensor}
Let $p$ be a prime and let $G$ be a finite-rank torsion-free abelian group.  
Let $D$ be the divisible subgroup of $G\otimes \Z_p$.  
Then there is a continuous isomorphism between $G^{*p*p}$ and $(G\otimes \Z_p)/D$ that sends $\hat{g}$ to $g\otimes 1 + D$ for all $g\in G$.  
\end{thm}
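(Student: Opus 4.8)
The plan is to build the isomorphism from the universal property of the pro-$p$ completion and to exhibit an explicit continuous inverse coming from the tensor product, so that bijectivity reduces to a density argument. By the structure theorem quoted above, $(G\otimes \Z_p)/D$ is a direct sum of finitely many copies of $\Z_p$, hence a pro-$p$ group. The assignment $g\mapsto g\otimes 1 + D$ is a homomorphism $G\to (G\otimes \Z_p)/D$, and since the target is a pro-$p$ group, the universal property of $G^{*p*p}$ (which is the pro-$p$ completion of $G$, by \cite[Theorem 4.3]{M:functionals}) furnishes a unique continuous homomorphism $\Phi\colon G^{*p*p}\to (G\otimes \Z_p)/D$ with $\Phi(\hat g) = g\otimes 1 + D$ for every $g\in G$. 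This $\Phi$ is the candidate isomorphism.

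Next I would construct a continuous map in the opposite direction. Because $G$ has finite rank, each quotient $G/p^nG$ is finite, so $G^{*p*p} = \varprojlim_n G/p^nG$. For each $n$ there is a natural homomorphism $G\otimes \Z_p \to G\otimes \Z/p^n\Z = G/p^nG$, and these are compatible as $n$ varies, yielding a continuous homomorphism $G\otimes \Z_p \to G^{*p*p}$. Every element of $D$ is divisible by $p^n$ for all $n$, hence maps to $0$ in each $G/p^nG$; thus $D$ lies in the kernel, and we obtain a continuous homomorphism $\Psi\colon (G\otimes \Z_p)/D \to G^{*p*p}$. By construction $\Psi(g\otimes 1 + D) = \hat g$.

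Finally I would check that $\Phi$ and $\Psi$ are mutually inverse. On the image of $G$ this is immediate, since $\Psi\Phi(\hat g) = \hat g$ and $\Phi\Psi(g\otimes 1 + D) = g\otimes 1 + D$. The image of $G$ is dense in $G^{*p*p}$ by the definition of the pro-$p$ completion, and it is dense in $(G\otimes \Z_p)/D$ because the elements $g\otimes 1$ generate $G\otimes \Z_p$ over $\Z_p$ while $\Z$ is dense in $\Z_p$. Since $\Phi$ and $\Psi$ are continuous and both spaces are Hausdorff, the relations $\Psi\circ \Phi = \mathrm{id}$ and $\Phi\circ \Psi = \mathrm{id}$, valid on these dense subsets, hold everywhere. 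Hence $\Phi$ is a continuous isomorphism with continuous inverse $\Psi$, as required.

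The step I expect to be the main obstacle is the injectivity of $\Phi$, equivalently the construction of the reverse map $\Psi$: one must verify carefully that the natural maps $G\otimes \Z_p \to G/p^nG$ really do annihilate the divisible subgroup $D$ and assemble into a continuous map into the inverse limit, and that the finite-rank hypothesis is exactly what guarantees the quotients $G/p^nG$ are finite, so that $\varprojlim_n G/p^nG$ is genuinely the pro-$p$ completion $G^{*p*p}$. Surjectivity and continuity, by contrast, are comparatively routine once the two maps are in hand.
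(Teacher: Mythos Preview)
Your argument is correct and follows the same plan as the paper: produce maps in both directions and use density of the image of $G$ to show the composites are identities. The only difference is in how the map $(G\otimes\Z_p)/D\to G^{*p*p}$ is built---the paper obtains it from the middle-linear map $(g,\alpha)\mapsto\alpha\hat g$ (which kills $D$ because $G^{*p*p}$ is reduced) and then observes that a $\Z_p$-module homomorphism between finite-rank free $\Z_p$-modules is automatically continuous, whereas you assemble the same map from the inverse-limit description $G^{*p*p}=\varprojlim_n G/p^nG$.
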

\begin{proof}
There is a middle linear map from the product $G\times \Z_p$ to $G^{*p*p}$ that sends $(g,\alpha)$ to $\alpha \hat{g}$ for all $g\in G, \alpha\in \Z_p$.  
Hence there exists a group homomorphism $f : G\otimes \Z_p \to G^{*p*p}$ with the property that $f(g\otimes \alpha) = \alpha \hat{g}$.  
$G^{*p*p}$ is reduced---i.e., its divisible subgroup is trivial---so $D$ is contained in the kernel of $f$.  
Thus $f$ induces a group homomorphism $\tilde{f} : (G\otimes \Z_p)/D\to G^{*p*p}$.  
Let us show that $\tilde{f}$ is the required isomorphism.  

$(G\otimes \Z_p)/D$ and $G^{*p*p}$ are both free $\Z_p$-modules of finite $\Z_p$-rank, and $\tilde{f}$ is, in fact, a $\Z_p$-module homomorphism, because $f$ is a $\Z_p$-module homomorphism and $D$ is a $\Z_p$-submodule of $G\otimes \Z_p$.  
Hence $(G\otimes \Z_p)/D$ and $G^{*p*p}$ are both pro-$p$ groups, and moreover the $p$-adic topology agrees with the pro-$p$ topology on both of them.  
(The $p$-adic topology on an abelian group $H$ has as a basis the collection of cosets of subgroups, the quotients of which are $p$-groups; for the pro-$p$ topology these quotients must be \emph{finite} $p$-groups.  
These two notions agree for finite-rank free $\Z_p$-modules.)  

A $\Z_p$-module homomorphism between two finite-rank $\Z_p$-modules is automatically continuous with respect to the $p$-adic topology; hence $\tilde{f}$ is a continuous group homomorphism between two pro-$p$ groups.  

On the other hand, the universal property of the pro-$p$ completion means that there exists a continuous group homomorphism $\eta : G^{*p*p}\to (G\otimes \Z_p)/D$ with the property that $\eta(\hat{g}) = g\otimes 1 + D$ for all $g\in G$.  
But then $\tilde{f}\circ \eta : G^{*p*p}\to G^{*p*p}$ is a continuous group homomorphism that agrees with $id_{G^{*p*p}}$ on the dense subgroup $\{ \hat{g} : g\in G\}$; hence $\tilde{f}\circ \eta$ is the identity on $G^{*p*p}$.  
Similarly, $\eta\circ \tilde{f}: (G\otimes\Z_p)/D \to (G\otimes \Z_p)/D$ is a continuous group homomorphism that agrees with the identity on the subgroup generated by $g\otimes 1 + D$, which is easily seen to be dense.  
\end{proof}

Theorem \ref{THM:tensor} establishes the connection between $p$-adic functionals on $G$ and the Kurosch matrices \cite{K:matrices} of $G$ if $G$ has finite rank.  
Specifically, the Kurosch matrices have entries that express the elements of some basis of $G$ in terms of a $\Q_p$-basis for $D$ and a $\Z_p$-basis for $(G\otimes \Z_p)/D$ (see \cite{F:book} for details).  
The factored-form matrices of \cite{M:functionals} are obtained from the Kurosch matrices by dropping the columns that correspond to $D$ (and taking transposes).  

It is unclear to me if Theorem \ref{THM:tensor} still holds if the hypothesis of finite rank is dropped; the issue is whether or not the maps are still continuous in this case.

\section{The stationary case}\label{SEC:stationary-states}

This section deals with torsion-free abelian groups $G$ that are inductive limits of stationary inductive sequences of finitely-generated free abelian groups.  

\begin{defn}\label{DEF:stationary}
Let $G$ be a torsion-free abelian group.  
Let us say that $G$ is \defemph{stationary} if it is isomorphic to the inductive limit $\varinjlim (\Z^r,A)$ of a stationary inductive sequence of finite-rank free abelian groups in which the homomorphism $A$ is the same at every stage.  
\end{defn}

Objects that are isomorphic to stationary inductive limits in the category of \emph{ordered} abelian groups have been studied before in \cite{H:irrational} and \cite{M:stationary}.  
The groups here need not have any order structure; stationary groups of this type have been studied before in \cite{D:stationary}.  

Suppose that $G = \varinjlim (\Z^r;A)$ for some endomorphism $A$ of $\Z^r$ (which, given a basis of $\Z^r$, can be represented as a matrix in $M_{r\times r}(\Z)$, which will also be denoted by $A$).  
Then we can explicitly describe the $p$-adic functionals on $G$ using basic linear algebra.  
For this purpose it will be useful to assume that the matrix $A$ is non-singular---i.e., its determinant is non-zero---which we may do without loss of generality by arguments from \cite[p. 64]{H:irrational} and \cite[Remark 5.5]{M:stationary}.  
In this case, as in Section \ref{SEC:inductive-limit}, $G$ is isomorphic to $\bigcup_{n=0}^\infty A^{-n}(\Z^r)$.  

\subsection{$p$-divisibility}\label{SUBSEC:p-divisibility}

Recall that, if $n\in\N$, then an element $g$ of a torsion-free abelian group $G$ is called \defemph{$n$-divisible} in $G$ if $g\in nG$, and $G$ is called \defemph{$n$-divisible} if each $g\in G$ is $n$-divisible.  
Let $p$ be a prime.  
Proposition \ref{PROP:p-divisible}, below, gives a characterization of $p$-divisibility for a stationary limit $G$.  
Its proof uses the following two lemmas.  

\begin{lemma}\label{LEM:root-count}
Let $R$ be a discrete valuation ring with uniformizing parameter $\pi$ and absolute value $|\cdot |_\pi$.  
Let $f(x) = x^r + \alpha_1x^{r-1} + \cdots + \alpha_{r-1}x + \alpha_r\in R[x]$ be a monic polynomial (where implicitly $\alpha_0 = 1$).  
Let $K$ denote the splitting field of $f(x)$ over the field of fractions of $R$, and let $|\cdot |_K$ denote its absolute value with respect to some uniformizing parameter.  
Then the number of roots $z$ of $f(x)$, counted with multiplicity, for which $|z|_K = 1$ is equal to $\max_{0\leq i \leq r} \{ i : |\alpha_i|_\pi = 1\}$.  
\end{lemma}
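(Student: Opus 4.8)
The plan is to pass to the valuation ring of $K$ and separate the roots of $f$ according to whether their absolute value is $1$ or strictly smaller, and then to recover the count of unit roots from a single reduction modulo the maximal ideal. Let $\mathcal{O}_K = \{ z\in K : |z|_K\leq 1\}$ be the valuation ring of $K$, with maximal ideal $\mathfrak{m}_K = \{ z : |z|_K < 1\}$ and residue field $\kappa = \mathcal{O}_K/\mathfrak{m}_K$. Since $f$ is monic with coefficients in $R\subseteq \mathcal{O}_K$, each of its roots $z_1,\ldots,z_r\in K$ is integral over $R$; because $\mathcal{O}_K$ is integrally closed in $K$ and contains $R$, this forces every root into $\mathcal{O}_K$, so that $|z_j|_K\leq 1$ for all $j$. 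Consequently the roots with $|z_j|_K = 1$ are exactly those lying in $\mathcal{O}_K\setminus\mathfrak{m}_K$, while the remaining roots lie in $\mathfrak{m}_K$.

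Next I would reduce the factorization $f(x) = \prod_{j=1}^r (x-z_j)$ modulo $\mathfrak{m}_K$. Writing an overbar for the image under the ring homomorphism $\mathcal{O}_K[x]\to\kappa[x]$, this gives $\bar f(x) = \prod_{j=1}^r (x-\bar z_j)$, where $\bar z_j\neq 0$ precisely when $|z_j|_K = 1$ and $\bar z_j = 0$ precisely when $|z_j|_K < 1$. Hence, if $s$ denotes the number of roots with $|z_j|_K = 1$, then $\bar f(x) = x^{r-s}g(x)$ with $\deg g = s$ and $g(0)\neq 0$; that is, $r-s$ is the order of vanishing of $\bar f$ at $0$, the least index $j$ for which the coefficient of $x^j$ in $\bar f$ is non-zero. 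To translate this back into the $\alpha_i$, write $f(x) = \sum_{j=0}^r c_j x^j$ with $c_j = \alpha_{r-j}$ (so $c_r = \alpha_0 = 1$ and $c_0 = \alpha_r$). The coefficient of $x^j$ in $\bar f$ is the reduction of $c_j\in R$, which is non-zero in $\kappa$ if and only if $|c_j|_\pi = 1$, i.e.\ if and only if $|\alpha_{r-j}|_\pi = 1$. Thus $r-s = \min\{ j : |\alpha_{r-j}|_\pi = 1\}$, and the substitution $i = r-j$ converts the minimum over $j$ into a maximum over $i$, yielding $s = \max_{0\leq i\leq r}\{ i : |\alpha_i|_\pi = 1\}$, as claimed. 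The set on the right is non-empty since $|\alpha_0|_\pi = |1|_\pi = 1$, which matches the fact that $c_r = 1$ reduces to a non-zero element, so $\bar f\neq 0$ and its order of vanishing at $0$ is well defined.

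I expect the only delicate point to be the opening step: one must know that every root of $f$ has absolute value at most $1$, which relies on $\mathcal{O}_K$ being integrally closed in $K$ and on $|\cdot|_K$ genuinely extending $|\cdot|_\pi$. If $R$ is not complete there may be several absolute values on $K$ lying over $|\cdot|_\pi$, but the argument applies verbatim to any fixed choice, and the final formula shows that the resulting count is independent of that choice. The only remaining care is bookkeeping the index shift $i = r-j$ relating the coefficients $\alpha_i$ to the reduction $\bar f$, which is routine. An alternative would be to read off $s$ as the horizontal length of the slope-$0$ edge of the Newton polygon of $f$; this is equivalent, but requires constructing the entire polygon, whereas the reduction argument isolates exactly the unit roots that the statement asks us to count.
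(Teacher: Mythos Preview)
Your proof is correct and takes a genuinely different route from the paper's. The paper argues directly with the elementary symmetric polynomials: writing $\alpha_j$ (up to sign) as $\sum z_{i_1}\cdots z_{i_j}$, it observes that if exactly $k$ roots have $|z_i|_K = 1$ then the single term $z_1\cdots z_k$ dominates in $\alpha_k$ by the strong triangle inequality, forcing $|\alpha_k|_\pi = 1$, while every term in $\alpha_j$ for $j>k$ picks up a factor from $\mathfrak{m}_K$, forcing $|\alpha_j|_\pi < 1$. Your argument instead reduces the factorization $f = \prod (x-z_j)$ modulo $\mathfrak{m}_K$ and reads off the number of non-unit roots as the order of vanishing of $\bar f$ at $0$, then matches this against the coefficients via the index shift $i = r-j$.

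Both approaches are short; the paper's is marginally more self-contained (it uses only the ultrametric inequality and needs no appeal to integral closure), while yours is more structural and makes transparent why the answer is the same for every choice of extension of $|\cdot|_\pi$ to $K$---the formula is expressed purely in terms of the $\alpha_i$. Your remark that this is equivalent to locating the horizontal edge of the Newton polygon is apt; the paper does not mention this, though it invokes the Newton Polygon Theorem later when discussing $\chi_A^1$. The one place to be slightly careful in your write-up is the implicit identification $\mathfrak{m}_K \cap R = \pi R$ when you assert that $\bar c_j \neq 0$ in $\kappa$ iff $|c_j|_\pi = 1$; this holds because the chosen $|\cdot|_K$ restricts to (a power of) $|\cdot|_\pi$ on $R$, which you do note, but it is worth making the step explicit.
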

\begin{proof}
Note that, for $\alpha\in R$, $|\alpha|_\pi = |\alpha|_K^l$ for some $l\in\N$.  

Let $z_1,\ldots,z_r$ be the roots of $f(x)$, listed with multiplicity.  
The coefficient $\alpha_j$ of $f(x)$ is the $j$th elementary symmetric polynomial in $z_1,\ldots, z_r$; that is, 
\[
\alpha_j = \sum_{1\leq i_1 < \cdots < i_j \leq r} z_{i_1}\cdots z_{i_j}.  
\]

Note that $|z_i|_\pi \leq 1$ for all $1\leq i\leq r$.  
If exactly $k$ of the roots, counted with multiplicity, satisfy $|z_i|_K = 1$, then suppose without loss of generality that these are the first $k$ roots $z_1,\ldots, z_k$.  
We must show that (1) $|\alpha_k|_\pi = 1$ and (2) $|\alpha_j|_\pi < 1$ for all $j > k$.  

Let us first prove (1): $|\alpha_k|_\pi=1$.  
$\alpha_k$ is a sum of ${r}\choose{k}$ terms.  
One of these terms is $z_1\cdots z_k$, which has absolute value $1$.  
All the other terms have the form $z_{i_1}\cdots z_{i_k}$, where at least one of the indices $i_j$ is greater than $k$; hence $|z_{i_j}|_K < 1$, which implies $|z_{i_1}\cdots z_{i_k}|_K < 1$.  
Thus $|\alpha_k - z_1\cdots z_k|_K < 1$, so by the strong triangle inequality, $1 = |z_1\cdots z_k|_K \leq \max \{ |\alpha_k|_K,|\alpha_k-z_1\cdots z_k|_K\}$, so $|\alpha_k|_K = 1$.  
Then $|\alpha_k|_\pi = |\alpha_k|_K^l = 1$.  

Now let us prove (2).  
For any $j$, $\alpha_j$ is a sum of ${n}\choose{j}$ terms, each of which is a product of $j$ of the roots of $f$.  
If $j > k$, then each of these terms has as a factor at least one root $z_i$ with $|z_i|_K < 1$, so $|\alpha_j|_K \leq \max_{1\leq i_1<\cdots < i_j\leq r} \{ |z_{i_1}\cdots z_{i_j}|_K\} < 1$.  
Thus $|\alpha_j|_\pi = |\alpha_j|_K^l <1$.  
\end{proof}

The following lemma says, in particular, that an $r\times r$ integer matrix has some power with all entries divisible by the prime $p$ if and only if its characteristic polynomial has all coefficients except the leading one divisible by $p$.  
\begin{lemma}\label{LEM:charpoly-coeffs}
Let $R$ be a discrete valuation ring with uniformizing parameter $\pi$, and let $A\in M_{r\times r}(R)$ be a matrix.  
The following are equivalent:  
\begin{enumerate}
\item  There is some positive integer power of $A$ that has all entries in $\pi R$.  
\item  All coefficients of the characteristic polynomial of $A$ except the leading coefficient are in $\pi R$.  
\end{enumerate}
\end{lemma}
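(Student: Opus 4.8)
The plan is to reduce everything modulo $\pi$ and transfer the question to the residue field $k := R/\pi R$. Reduction of entries defines a ring homomorphism $M_{r\times r}(R)\to M_{r\times r}(k)$, which I will write as $A\mapsto \bar A$. Under this map condition (1) becomes transparent: a power $A^m$ has all its entries in $\pi R$ precisely when $\bar A^m = 0$, so (1) is equivalent to the assertion that $\bar A$ is \emph{nilpotent} over the field $k$.

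Next I would reformulate condition (2) in the same language. Since the characteristic polynomial $\det(xI - A)$ is a polynomial expression in the entries of $A$, and reduction mod $\pi$ is a ring homomorphism, it commutes with forming the characteristic polynomial: the reduction of $\det(xI-A)$ is exactly $\det(xI-\bar A)$. Hence the coefficients of the characteristic polynomial of $\bar A$ are the reductions $\bar\alpha_i$, and condition (2)---that $\alpha_i\in\pi R$ for every $i\geq 1$---says precisely that the characteristic polynomial of $\bar A$ is $x^r$.

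With both conditions translated, the lemma reduces to the standard fact that a matrix over a field is nilpotent if and only if its characteristic polynomial is $x^r$. For the implication completing (2)$\Rightarrow$(1), Cayley--Hamilton applied to $\bar A$ gives $\bar A^r = 0$ as soon as the characteristic polynomial is $x^r$; lifting back, this shows that already $A^r$ has all entries in $\pi R$, so the power in (1) may in fact be taken to be $r$. For (1)$\Rightarrow$(2), a nilpotent matrix over a field has all eigenvalues equal to $0$, forcing its characteristic polynomial to be $x^r$.

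I do not anticipate a genuine obstacle here; the only points requiring care are the commutation of the characteristic polynomial with reduction and the elementary linear algebra over $k$. It is worth noting that this residue-field argument makes no use of completeness of $R$, which is why I prefer it to any eigenvalue or spectral-radius estimate. One could alternatively route the result through Lemma \ref{LEM:root-count}: since $\alpha_0 = 1$ always has $|\alpha_0|_\pi = 1$, condition (2) is equivalent to $\max_{0\leq i\leq r}\{ i : |\alpha_i|_\pi = 1\} = 0$, which by that lemma means $A$ has no root of its characteristic polynomial with absolute value $1$; but converting this eigenvalue information into a statement about the entries of $A^m$ is most cleanly done through the reduction $\bar A$ in any case.
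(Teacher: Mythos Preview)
Your proof is correct, and for the implication (1)$\Rightarrow$(2) it takes a genuinely different route from the paper. Both arguments handle (2)$\Rightarrow$(1) via Cayley--Hamilton, but for the converse the paper passes to the splitting field $K$ of $\chi_A$ over the fraction field of $R$, observes that $\chi_{A^n}(x) = \prod_i (x - z_i^n)$, and then invokes Lemma~\ref{LEM:root-count} twice: once to deduce from the entries of $A^n$ that every $z_i^n$ (hence every $z_i$) has absolute value strictly less than $1$, and once more to translate this back into the statement about the coefficients of $\chi_A$. Your reduction modulo $\pi$ bypasses all of this: the question collapses to the elementary equivalence, over the residue field $k$, between nilpotency of $\bar A$ and $\chi_{\bar A}(x) = x^r$. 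Your approach is shorter, avoids any extension of the valuation, and makes Lemma~\ref{LEM:root-count} unnecessary for this particular lemma; the paper's approach, on the other hand, keeps the eigenvalue picture in view, which is consistent with how the surrounding results (Proposition~\ref{PROP:p-divisible}, Theorem~\ref{THM:stationary-states}) are organised around the unit-root/ideal-root dichotomy.
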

\begin{proof}
Let $\chi_A(x) = x^r + \alpha_{1}x^{r-1}+\cdots + \alpha_{r-1}x+\alpha_r$ be the characteristic polynomial of $A$.  
By the Cayley--Hamilton Theorem, $A^r = -\alpha_{1}A^{r-1}-\cdots - \alpha_rI$, which makes it clear that (2) implies (1).  

To see that (1) implies (2), suppose that $A^n$ has all entries in $\pi R$ for some $n\in \N$.  
Factor $\chi_A(x)$ over $K$, the splitting field of $\chi_{A}(x)$ over the field of fractions of $R$: $\chi_A(x) = (x-z_1)\cdots (x-z_r)$.  
Then the characteristic polynomial of $A^n$ is $\chi_{A^n}(x) = (x-z_1^n)\cdots (x-z_r^n)$.  
(To see that this is true, consider the Jordan forms of $A$ and $A^n$.)  
But then, as all entries of $A^n$ are in $\pi R$, the determinant formula $\chi_{A^n}(x) = \det ( xI - A^n)$ implies that $\chi_{A^n}(x)$ has all coefficients except the leading one in $\pi R$.  
By Lemma \ref{LEM:root-count}, this means that all of the roots of $\chi_{A^n}(x)$ have absolute value less than $1$ in the splitting field of $\chi_{A^n}(x)$ over the field of fractions of $R$.  
This field is contained in $K$, and the roots of $\chi_{A^n}(x)$ are the $n$th powers of the roots of $\chi_A(x)$, so the latter also all have absolute value less than $1$.  
Thus, by Lemma \ref{LEM:root-count}, $\chi_A(x)$ has all coefficients except for the leading one in $\pi R$.  
\end{proof}

The following Proposition can be proved using Lemma \ref{LEM:charpoly-coeffs}; it is also a consequence of Theorem \ref{THM:stationary-states}, below.  
\begin{prop}\label{PROP:p-divisible}
Let $p$ be a prime and let $A\in M_{r\times r}(\Z)$ be a non-singular matrix.  
Consider the group $G = \bigcup_{n=0}^\infty A^{-n}(\Z^r)$.  
The following are equivalent:  
\begin{enumerate}
\item  There is some positive integer power of $A$ that has all entries divisible by $p$.  
\item  All coefficients of the characteristic polynomial of $A$ except the leading coefficient are divisible by $p$.  
\item  $G$ is $p$-divisible.  
\end{enumerate}
\end{prop}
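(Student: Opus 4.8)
The plan is to prove the two equivalences $(1)\iff(2)$ and $(1)\iff(3)$ separately. The first is essentially immediate from Lemma~\ref{LEM:charpoly-coeffs}: I would apply that lemma with $R=\Z_p$ and uniformizing parameter $\pi=p$, viewing the integer matrix $A$ as an element of $M_{r\times r}(\Z_p)$. Since the entries of any power $A^n$ and the coefficients of the characteristic polynomial $\chi_A$ are ordinary integers, and an integer lies in $p\Z_p$ exactly when it is divisible by $p$, conditions (1) and (2) of the present proposition translate verbatim into conditions (1) and (2) of the lemma. So this equivalence requires no further work.

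It remains to connect these to $p$-divisibility, and the key reduction I would establish first is
\[
G \text{ is } p\text{-divisible} \iff \tfrac{1}{p}\Z^r \subseteq G.
\]
The forward implication is immediate: since $\Z^r\subseteq G$ and $p$-divisibility gives $\tfrac{1}{p}G\subseteq G$, we get $\tfrac{1}{p}\Z^r\subseteq\tfrac{1}{p}G\subseteq G$. For the converse, given any $g\in G$, write $g=A^{-n}v$ with $v\in\Z^r$; then $\tfrac{1}{p}g = A^{-n}\bigl(\tfrac{1}{p}v\bigr)$, and if $\tfrac{1}{p}\Z^r\subseteq G$ then $\tfrac{1}{p}v\in G$, say $\tfrac{1}{p}v = A^{-m}w$ with $w\in\Z^r$, so that $\tfrac{1}{p}g = A^{-(n+m)}w\in G$. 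Hence $G=pG$.

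Next I would translate the condition $\tfrac{1}{p}\Z^r\subseteq G$ into a statement about powers of $A$. Because $G=\bigcup_{n} A^{-n}(\Z^r)$ is an increasing union and $\tfrac{1}{p}\Z^r$ is finitely generated (by $\tfrac{1}{p}e_1,\dots,\tfrac{1}{p}e_r$), the inclusion $\tfrac{1}{p}\Z^r\subseteq G$ holds if and only if $\tfrac{1}{p}\Z^r\subseteq A^{-n}(\Z^r)$ for some single $n$. Applying $A^n$, this says $\tfrac{1}{p}A^n e_i\in\Z^r$ for each $i$, i.e.\ every column---hence every entry---of $A^n$ is divisible by $p$, which is exactly condition (1). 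Combining the two reductions yields $(3)\iff(1)$ and completes the chain of equivalences.

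The main obstacle, such as it is, lies in this last reduction: one must be careful to justify passing from the inclusion of the finitely generated group $\tfrac{1}{p}\Z^r$ into the directed union $G$ to its inclusion at a single finite stage, and to keep straight that $A^{-n}$ denotes the inverse endomorphism of $\Q^r$ (well defined since $A$ is non-singular) rather than an operation on $\Z^r$. An alternative, more conceptual route to $(1)\iff(3)$ is to observe that $G\otimes\Z/p\Z$ is the direct limit of $(\Z/p\Z)^r \xrightarrow{\bar A}(\Z/p\Z)^r\xrightarrow{\bar A}\cdots$, which vanishes precisely when the reduction $\bar A$ of $A$ modulo $p$ is nilpotent; since $G$ is $p$-divisible if and only if $G\otimes\Z/p\Z=0$, and $\bar A$ is nilpotent if and only if some power $A^n$ vanishes modulo $p$, this recovers the same equivalence.
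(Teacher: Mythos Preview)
Your proof is correct and follows essentially the same approach as the paper: both invoke Lemma~\ref{LEM:charpoly-coeffs} with $R=\Z_p$, $\pi=p$ for $(1)\iff(2)$, and both prove $(1)\iff(3)$ by reducing $p$-divisibility of $G$ to $p$-divisibility of the standard basis vectors $e_i\in\Z^r$, then using finite generation and the increasing nature of the union to land in a single stage $A^{-n}(\Z^r)$. Your intermediate criterion $\tfrac{1}{p}\Z^r\subseteq G$ is a cosmetic repackaging of the paper's direct statement that $g$ is $p$-divisible iff $A^n g\in p\Z^r$ for some $n$; the alternative $G\otimes\Z/p\Z$ argument you sketch is a nice conceptual bonus not in the paper.
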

\begin{proof}
The equivalence of statements (1) and (2) follows from Lemma \ref{LEM:charpoly-coeffs} by letting $\Z_p$ play the role of $R$ and $p$ play the role of $\pi$.  

To see that (1) and (3) are equivalent, note that $v\in \Q^r$ is in $G$ if and only if there is some $n\in \N$ such that $A^nv\in\Z^r$.  
Then $g\in G$ is $p$-divisible if and only if there is some $n\in\N$ such that $A^ng\in p\Z^r$.  
Thus if $G$ is $p$-divisible then in particular for each $i\leq r$, the standard basis element $e_i$ of $\Z^r\subseteq G$ is $p$-divisible in $G$, so there exists $n_i\in \N$ such that $A^{n_i}e_i\in p\Z^r$.  
Let $N = \max_{i\leq r}\{ n_i\}$; then $A^N$ has all entries divisible by $p$.  
Hence (3) implies (1).  

If (1) is true---say $A^N$ has all entries divisible by $p$---then choose $g\in G$.  
By the definition of $G$, there exists $n\in \N$ such that $A^ng\in \Z^r$; thus $A^{n+N}g\in p\Z^r$, and so $g$ is divisible by $p$ in $G$.  
As $g$ in $G$ was arbitrary, $G$ is $p$-divisible, so (1) implies (3).  
\end{proof}

Note that a variant of Proposition \ref{PROP:p-divisible} holds even if the matrix $A$ is singular; that is, if $G$ is the inductive limit of a stationary system with matrix $A$, then $G$ is $p$-divisible if and only if statement (1) (equivalently, statement (2)) is true of $A$.  
The proof of this fact is only slightly more involved, but requires an alternative presentation of $G$ that is somewhat more opaque than the one that we have used in the non-singular case.  

\subsection{$p$-adic functionals in the stationary case}\label{SUBSEC:stationary-states}

Let $p$ be a prime and let $A\in M_{r\times r}(\Z)$ be a matrix.  
Let $\chi_A(x)$ be the characteristic polynomial of $A$, and let $K$ denote the splitting field of $\chi_A(x)$ over $\Q_p$.  
Then the ring of integers $\mathcal{O}_K$ of $K$ is a discrete valuation ring; let $\pi$ be a uniformizing parameter for it and let $|\cdot |_\pi$ be the absolute value arising from $\pi$.  
All of the roots $z$ of $\chi_A(x)$ are in $\mathcal{O}_K$, and so satisfy $|z|_\pi \leq 1$.  

If $A$ is non-singular, the inductive limit of the stationary system with matrix $A$ is isomorphic to $G = \bigcup_{n=0}^\infty A^{-n}(\Z^r)$.  
By Proposition \ref{PROP:p-divisible}, this is $p$-divisible if and only if every coefficient of $\chi_A(x)$ except the leading one is divisible by $p$.  
By Lemma \ref{LEM:root-count}, this is equivalent to saying that all roots $z$ of $\chi_A(x)$ satisfy $|z|_\pi < 1$.  
So $G$ is not $p$-divisible if and only if $\chi_A(x)$ has at least one root $z\in K$ with $|z|_\pi = 1$.  
And by \cite[Corollary 3.15]{M:functionals}, $G$ has non-zero $p$-adic functionals if and only if $G$ is not $p$-divisible; therefore the number of roots $z$ of $\chi_A(x)$ with $|z|_\pi = 1$ is of particular interest.  

\begin{defn}\label{DEF:max-roots}
Let $R$ be a discrete valuation ring with field of fractions $Q$.  
Let $f\in R[x]$ be a monic polynomial of degree $r\in\N$.  
Let $K$ denote the splitting field of $f$ over $Q$, and let $|\cdot |_\pi$ denote the absolute value on $K$ with respect to some uniformizing parameter $\pi$.  
Let $z_1,\ldots,z_r$ denote the roots of $f$ in $K$ listed with multiplicity.  
Then define the \defemph{unit root factor} of $f$ to be the product 
\begin{align*}
f^1(x) := \prod_{\{ i : |z_i|_\pi = 1\}} (x-z_i).  
\end{align*}
Define the \defemph{ideal root factor} of $f$ to be the product 
\begin{align*}
f^0(x) := \prod_{\{ i : |z_i|_\pi < 1\}} (x-z_i),  
\end{align*}
so that $f(x) = f^1(x)f^0(x)$.  
\end{defn}
If $R = \Z_p$ in Definition \ref{DEF:max-roots}, then it is a consequence of the Newton Polygon Theorem \cite[Theorem 6.3.1]{C:local-fields} that $f^1(x) \in \Q_p[x]$, and hence $f^0(x)\in\Q_p[x]$ as well.  

The reason for introducing Definition \ref{DEF:max-roots} here is that the unit root factor $\chi_A^1(x)$ of $\chi_A(x)$ gives information about the $p$-adic functionals on $G$.  
$\chi_A^1(x)$ has an associated invariant subspace for $A$; namely, the null space of $\chi^1_A(A)$.  
In fact, there are two invariant subspaces: on the right, there is the subspace $\{ v\in \Q_p^r : \chi^1_A(A)v = 0\}$, and on the left there is the subspace $\{ w\in (\Q_p^r)^* : w\chi^1_A(A) = 0\}$, which are invariant under the left and right actions of $A$, respectively.  
It is the latter of these two subspaces that furnishes the $p$-adic functionals on $G$.  
Specifically, the $p$-adic functionals on $G$ are exactly the elements of this subspace that have entries in $\Z_p$---this is the content of Theorem \ref{THM:stationary-states}, below.  
The proof of this theorem uses Lemmas \ref{LEM:vector-space-properties} and \ref{LEM:module-properties}, which are expressed in terms of the following definition.  

\begin{defn}\label{DEF:module-decomposition}
Let $R$ be a discrete valuation ring and let $M\cong R^r$ be a free $R$-module of finite $R$-rank.  
Let $A: M\to M$ be an $R$-module homomorphism with characteristic polynomial $\chi_A(x)$.  
Then define the \defemph{unit submodule of $M$} associated to $A$ to be 
\begin{align*}
M^1(A) & := \{ v\in M : \chi^1(A)v = 0\},
\end{align*}
and the \defemph{ideal submodule of $M$} associated to $A$ to be 
\begin{align*}
M^0(A) & := \{ v\in M : \chi^0(A)v = 0\}.
\end{align*}

Let $Q$ be the field of fractions of $R$; then $M\subseteq V = Q^r$.  
Define the \defemph{unit subspace of $V$} associated to $A$ to be 
\begin{align*}
V^1(A) & := \{ v\in V : \chi^1(A)v = 0\},
\end{align*}
and the \defemph{ideal subspace of $V$} associated to $A$ to be 
\begin{align*}
V^0(A) & := \{ v\in V : \chi^0(A)v = 0\}.
\end{align*}
\end{defn}

\begin{lemma}\label{LEM:vector-space-properties}
Let $R$ be a discrete valuation ring with uniformizing parameter $\pi$ and let $Q$ denote the field of fractions of $R$.  
Let $M\cong R^r$ be a free $R$-module of finite $R$-rank, let $A: M\to M$ be an $R$-module homomorphism, and let $V^1(A)$ and $V^0(A)$ be the unit and ideal subspaces, respectively, of $Q^r$ associated to $A$.  
Then 
\begin{enumerate}
\item  $Q^r$ is equal to the internal direct sum of $V^1(A)$ and $V^0(A)$, which are $A$-invariant;
\item  $A|_{V^1(A)}$ is an isometry of $Q$-vector spaces; and 
\item  for $n\geq 0$, there exists $k\in\N$ such that $\| A^k(v) \|_\pi < \| v\|_\pi$ for all $v\in Q^r$.  
\end{enumerate}
\end{lemma}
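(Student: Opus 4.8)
The plan is to treat the three parts in turn, with a shared preliminary and a common mechanism for the norm estimates. Throughout, $\|\cdot\|_\pi$ denotes the coordinatewise sup-norm on $Q^r$ induced by $|\cdot|_\pi$, and the mechanism is the elementary observation that any $P\in R[x]$ sends $A\in M_{r\times r}(R)$ to a matrix $P(A)\in M_{r\times r}(R)$; by the ultrametric inequality such a matrix satisfies $\|P(A)w\|_\pi\le\|w\|_\pi$ for every $w\in Q^r$, and if in addition $P\in\pi R[x]$ then $P(A)\in\pi M_{r\times r}(R)$, so $\|P(A)w\|_\pi\le|\pi|_\pi\,\|w\|_\pi$. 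The preliminary I need is that the two factors $\chi_A^1$ and $\chi_A^0$ of Definition \ref{DEF:max-roots} actually lie in $R[x]$: for $R=\Z_p$ this is the Newton-polygon remark following that definition, and in general it follows from Hensel's lemma, since modulo $\pi$ the non-unit roots reduce to $0$ while the unit roots reduce to nonzero elements, giving a coprime factorization $\bar\chi_A=x^{\deg\chi_A^0}\cdot g$ with $g(0)\neq 0$ that lifts to $\chi_A=\chi_A^0\chi_A^1$ over $R$.

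For part (1), the factors $\chi_A^1$ and $\chi_A^0$ share no roots, so they are coprime in the PID $Q[x]$; choose $u,w\in Q[x]$ with $u\chi_A^1+w\chi_A^0=1$. For any $v\in Q^r$ the identity $v=w(A)\chi_A^0(A)v+u(A)\chi_A^1(A)v$ exhibits $v$ as a sum of a vector in $V^1(A)=\ker\chi_A^1(A)$ and one in $V^0(A)=\ker\chi_A^0(A)$, because $\chi_A^1(A)\chi_A^0(A)=\chi_A(A)=0$ by Cayley--Hamilton; and if $v\in V^1(A)\cap V^0(A)$ then the same identity gives $v=0$. Thus $Q^r=V^1(A)\oplus V^0(A)$, and both summands are $A$-invariant because they are kernels of polynomials in $A$.

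Parts (2) and (3) now reduce to the mechanism above, using that $\chi_A^1$ annihilates $V^1(A)$ and $\chi_A^0$ annihilates $V^0(A)$ by definition. Since $A\in M_{r\times r}(R)$ I always have $\|Av\|_\pi\le\|v\|_\pi$, so for (2) only the reverse inequality on $V^1(A)$ needs proof. Writing $\chi_A^1(x)=x^s+\beta_1x^{s-1}+\cdots+\beta_s$ and $B=A|_{V^1(A)}$, the relation $B^s+\beta_1B^{s-1}+\cdots+\beta_sI=0$ holds on $V^1(A)$; its constant term $\beta_s=(-1)^s\prod_{|z_i|_\pi=1}z_i$ has absolute value $1$ and lies in $R$, hence is a unit, so the relation expresses $B^{-1}$ as $P(A)|_{V^1(A)}$ for an explicit $P\in R[x]$. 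For $v\in V^1(A)$, applying this to $w=Av\in V^1(A)$ gives $\|v\|_\pi=\|P(A)w\|_\pi\le\|Av\|_\pi$, whence equality and the isometry. For part (3) --- which in view of (2) must concern $V^0(A)$, since $A$ is an isometry on $V^1(A)$ --- I read the claim as: there is $k\in\N$ with $\|A^kv\|_\pi<\|v\|_\pi$ for all nonzero $v\in V^0(A)$. Writing $\chi_A^0(x)=x^t+\gamma_1x^{t-1}+\cdots+\gamma_t$, all of $\gamma_1,\dots,\gamma_t$ have absolute value $<1$ by Lemma \ref{LEM:root-count} and lie in $R$, hence in $\pi R$; the relation $\chi_A^0(A)|_{V^0(A)}=0$ then gives $(A|_{V^0(A)})^{t}=Q(A)|_{V^0(A)}$ with $Q(x)=-(\gamma_1x^{t-1}+\cdots+\gamma_t)\in\pi R[x]$. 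Taking $k=t$, the mechanism yields $\|A^tv\|_\pi\le|\pi|_\pi\,\|v\|_\pi<\|v\|_\pi$ for every nonzero $v\in V^0(A)$.

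The main obstacle is conceptual rather than computational: it is the reconciliation of the intrinsic, lattice-theoretic behaviour of $A$ on the invariant subspaces with the extrinsic coordinate sup-norm $\|\cdot\|_\pi$ in which the conclusions are phrased. The device that overcomes it in both (2) and (3) is to realise the relevant operator on a subspace as the restriction of an honest matrix $P(A)$ with entries in $R$ (or in $\pi R$), so that the coordinatewise norm bound for $R$-matrices applies directly and one never has to compare the sup-norm with a lattice norm. Making that device work depends entirely on the preliminary that $\chi_A^1,\chi_A^0\in R[x]$, so that the coefficients appearing in the Cayley--Hamilton relations are genuinely in $R$; verifying this factorization --- and, in the non-complete case, the passage to the completion needed to invoke Hensel's lemma --- is the one point that requires real care.
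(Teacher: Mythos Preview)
Your proof is correct and takes a more explicit route than the paper's. For (1) the paper simply gestures at the Jordan form, while your B\'ezout argument is equivalent but self-contained. For (2) the paper asserts that the operator norm of $A|_{V^1(A)}^{-1}$ with respect to the ambient sup-norm $\|\cdot\|_\pi$ equals the maximum absolute value of its eigenvalues; this is not obvious, since $A|_{V^1(A)}^{-1}$ is defined only on a subspace and has no a~priori $R$-integral matrix representation in the standard coordinates. Your device---realising $(A|_{V^1(A)})^{-1}$ as the restriction of a global polynomial $P(A)$ with $P\in R[x]$---sidesteps this, because the bound $\|P(A)w\|_\pi\le\|w\|_\pi$ then holds directly in the original coordinates. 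For (3) the paper applies Lemma~\ref{LEM:charpoly-coeffs} to a matrix representation of $A|_{V^0(A)}$ with respect to \emph{some} basis of $V^0(A)$, which yields a contraction in the basis-dependent norm rather than in $\|\cdot\|_\pi$; again your mechanism avoids the change of basis. The price you pay is the preliminary that $\chi_A^1,\chi_A^0\in R[x]$, which the paper never isolates; your Hensel argument handles this cleanly when $R$ is complete (the only case the paper actually uses), and you are right to flag the non-complete case as the one point needing care. You are also correct that statement (3) as printed cannot hold on all of $Q^r$---it contradicts (2) on $V^1(A)$ and fails trivially at $v=0$---and must be read, as you do and as the subsequent Lemma~\ref{LEM:module-properties} confirms, as a statement about $V^0(A)$.
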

\begin{proof}
Statement (1) follows from basic linear algebra---e.g., by considering the Jordan Normal Form of $A$.  

The characteristic polynomial of $A|_{V^1(A)}$ is $\chi_A^1(x)$, which certainly does not have $0$ as a root, because $|0|_\pi = 0 < 1$.  
Therefore $A|_{V^1}$ is non-singular, and hence invertible on $V^1$.  
Moreover, the operator norm of $A|_{V^1(A)}^{-1}$ is $\inf \{ c \geq 0 : \| A|_{V^1(A)}^{-1} (v) \|_\pi \leq c \| v\|_\pi $ for all $ v\in V^1\}$, which equals the maximum norm of any eigenvalue of $A|_{V^1(A)}^{-1}$---i.e., $1$.  
Thus $A|_{V^1(A)}$ is an isometry, proving statement (2).  

The characteristic polynomial of $A|_{V^0(A)}$ is $\chi_A^0(x)$.  
By Lemma \ref{LEM:root-count}, all of its coefficients except the leading one lie in $\pi R$.  
By Lemma \ref{LEM:charpoly-coeffs}, this implies that, if $B$ is a matrix representation of $A|_{V^0(A)}$ with respect to some basis of $V^0(A)$, then some positive integer power of $B$ has all entries in $\pi R$, which proves statement (3).  
\end{proof}

\begin{lemma}\label{LEM:module-properties}
Let $R$ be a discrete valuation ring with uniformizing parameter $\pi$ and let $M\cong R^r$ be a free $R$-module of finite $R$-rank.  
Let $A: M\to M$ be an $R$-module homomorphism and let $M^1(A)$ and $M^0(A)$ be the unit and ideal submodules, respectively, of $M$ associated to $A$.  
Then 
\begin{enumerate}
\item  $M$ is equal to the internal direct sum of $M^1(A)$ and $M^0(A)$, which are $A$-invariant;
\item  $A|_{M^1(A)}$ is an isometric isomorphism of $R$-modules; and 
\item  for all $n\geq 0$, there exists $k\in\N$ such that $A^k(v)\in \pi^nM$ for all $v\in M$.  
\end{enumerate}
\end{lemma}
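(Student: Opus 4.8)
The plan is to derive Lemma~\ref{LEM:module-properties} from Lemma~\ref{LEM:vector-space-properties} by intersecting the vector-space decomposition with the lattice $M$. First I would establish statement (1). The factorization $\chi_A(x) = \chi_A^1(x)\chi_A^0(x)$ takes place in $Q[x]$, but in fact the Newton Polygon remark guarantees (when $R=\Z_p$) that both factors lie in $\Q_p[x]$; more robustly, over any complete discrete valuation ring the two factors have coefficients in $R$ itself, since each is monic and their product has coefficients in $R$ (this is a standard Gauss-lemma/Hensel-type fact about factoring over a DVR according to the Newton polygon). Granting $\chi_A^1, \chi_A^0 \in R[x]$, the operators $\chi_A^1(A)$ and $\chi_A^0(A)$ are genuine $R$-module endomorphisms of $M$, so $M^1(A)$ and $M^0(A)$ are honest $R$-submodules and are $A$-invariant because $A$ commutes with any polynomial in $A$. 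Since $\chi_A^1$ and $\chi_A^0$ are coprime in $Q[x]$, a B\'ezout relation $u(x)\chi_A^1(x)+w(x)\chi_A^0(x)=1$ holds over $Q$; clearing denominators and using that the resultant of the two factors is a unit in $R$ (again because the two factors have no common root and, over a DVR, the resultant is a $\pi$-unit when the root valuations are separated as $=1$ versus $<1$) lets me realize this B\'ezout relation over $R$, which gives the internal direct sum $M = M^1(A)\oplus M^0(A)$ over $R$, refining the $Q$-decomposition of statement~(1) in Lemma~\ref{LEM:vector-space-properties}.

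Next, for statement (2), I would observe that $A|_{M^1(A)}$ is the restriction of the isometry $A|_{V^1(A)}$ from Lemma~\ref{LEM:vector-space-properties}(2), so it is automatically injective and norm-preserving on $M^1(A)$. To see surjectivity onto $M^1(A)$, note that $A|_{V^1(A)}$ is invertible with $\|A|_{V^1(A)}^{-1}\|$ equal to $1$, i.e.\ its inverse is also an isometry; an isometric bijection of $V^1(A)$ that preserves norms must map the lattice $M^1(A)$ (the unit ball intersected with $V^1(A)$, up to scaling) onto itself. Concretely, since $A^{-1}$ on $V^1(A)$ is an isometry and $\chi_A^1$ has unit constant term (its roots are all $\pi$-units, so their product is a unit), the inverse $A|_{V^1(A)}^{-1}$ is itself a polynomial in $A$ with $R$-coefficients, hence preserves $M^1(A)$; this gives the two-sided inverse as an $R$-module map and establishes that $A|_{M^1(A)}$ is an isometric isomorphism.

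For statement (3), I would use statement (1) to write each $v\in M$ as $v = v^1 + v^0$ with $v^1\in M^1(A)$ and $v^0\in M^0(A)$. On the $M^1$ component, $A$ acts as an isometric isomorphism, so $A^k v^1$ stays in $M^1(A)\subseteq M$ for every $k$ but does \emph{not} shrink; on the $M^0$ component, Lemma~\ref{LEM:vector-space-properties}(3) (or directly Lemma~\ref{LEM:charpoly-coeffs} applied to $\chi_A^0$) shows that some power $A^{k_0}$ of $A$ has all entries in $\pi R$ on a basis of $M^0(A)$, so iterating drives $A^{mk_0}v^0$ into $\pi^m M^0(A)\subseteq \pi^m M$. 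The subtlety is that statement (3) as written asks for $A^k(v)\in\pi^n M$ for \emph{all} $v\in M$, which is false on the $M^1$ summand unless that summand is trivial; I expect that the intended reading (matching the vector-space analogue, which is itself only correct on $V^0$) restricts the claim to the ideal submodule $M^0(A)$, and my proof will make this explicit by noting that the nilpotent-modulo-$\pi$ behaviour is confined to $M^0(A)$.

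The main obstacle is the descent of the factorization and the B\'ezout/resultant relation from $Q$ to $R$: everything else follows formally from Lemma~\ref{LEM:vector-space-properties} once one knows that $M^1(A)$ and $M^0(A)$ are $R$-direct summands. Establishing that $\chi_A^1,\chi_A^0\in R[x]$ and that their resultant is a unit in $R$ (so that the splitting is defined integrally, not merely rationally) is the crux; I would handle it via the completeness of $R$ and the separation of root valuations ($=1$ against $<1$), invoking the Newton polygon as the paper already does in the remark following Definition~\ref{DEF:max-roots}.
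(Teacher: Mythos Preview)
Your approach is correct, and your diagnosis of statement~(3) is on target: as literally written it fails on the $M^1$ summand, and the paper's own proof in fact only establishes it on $M^0(A)$ (it says statements~(2) and~(3) ``follow immediately'' from the corresponding parts of Lemma~\ref{LEM:vector-space-properties}, and part~(3) there is proved only for $A|_{V^0(A)}$).

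For statement~(1), however, your route genuinely differs from the paper's.  You argue algebraically: $\chi_A^1,\chi_A^0\in R[x]$, their resultant is a $\pi$-unit (since $|z_i-z_j'|_\pi=1$ whenever $|z_i|_\pi=1$ and $|z_j'|_\pi<1$), and hence a B\'ezout relation $u\chi_A^1+w\chi_A^0=1$ already holds over $R$, giving the $R$-module splitting directly.  The paper instead establishes (2) and (3) first as restrictions of Lemma~\ref{LEM:vector-space-properties}, and then uses them \emph{dynamically} to obtain (1): given $v=w^1+w^0\in M$ with $w^i\in V^i(A)$, iterate $A$ until $A^k(w^0)\in M$ (norm-shrinking on $V^0$), deduce $A^k(w^1)=A^k(v)-A^k(w^0)\in M\cap V^1(A)=M^1(A)$, and then pull $w^1$ back into $M^1(A)$ using the invertibility of $A|_{M^1(A)}$ from (2).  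Your resultant argument is cleaner and avoids the superficially circular-looking use of (2) and (3) in the proof of (1); the paper's argument, on the other hand, does not need to verify the unit-resultant claim and works as soon as one accepts that $M^i(A)=M\cap V^i(A)$.  Your handling of (2)---noting that $\chi_A^1$ has unit constant term, so $A|_{V^1(A)}^{-1}$ is a polynomial in $A$ with $R$-coefficients---is a pleasant explicit alternative to the paper's implicit ``an isometry preserves the unit ball'' reasoning.
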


\begin{proof}
In the terminology of Lemma \ref{LEM:vector-space-properties}, $M^1(A) = Q\cap V^1(A)$ and $M^0(A) = M\cap V^0(A)$, so statements (2) and (3) follow immediately from the corresponding statements in Lemma \ref{LEM:vector-space-properties}.  

To prove statement (1), note that every $v\in M$ can be written uniquely as a sum $v = w^1 + w^0$, with $w^1\in V^1(A)$ and $w^0\in V^0(A)$, so we must show that $w^1\in M^1(A)$ and $w^0\in M^0(A)$.  
As $v\in M$, it will suffice to show that $w^1\in M^1(A)$.  
By statement (3), there exists $k\in \N$ such that $A^k(w^0)\in M^0(A)$.  
But $A$ is an endomorphism of $M$, so $A^k(v) = A^k(w^1) + A^k(w^0) \in M$, which implies that $A^k(w^1)\in M^1(A)$.  
But by statement (2), $A|_{M^1(A)}$ is invertible; therefore $w^1\in M^1(A)$, which proves statement (1).  
\end{proof}

\begin{thm}\label{THM:stationary-states}
Let $p$ be a prime, let $A\in M_{r\times r}(\Z)$ be a non-singular matrix, let $\chi_A(x)$ be the characteristic polynomial of $A$, let $\chi_A^1(x)$ be the unit root factor of $\chi_A(x)$, and let $G = \bigcup_{n=0}^\infty A^{-n}(\Z^r)$.  
Then $G^{*p} = \{ w\in (\Z_p^r)^* : w\chi_A^1(A) = 0\}$.  
\end{thm}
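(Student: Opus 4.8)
The plan is to combine Proposition \ref{PROP:limit-functionals} with the structural decomposition from Lemma \ref{LEM:module-properties}. By Proposition \ref{PROP:limit-functionals}, applied to the stationary case where every $A_i = A$, we have $G^{*p} = \bigcap_{n=0}^\infty (\Z_p^r)^* A^n$. So the task is to show that this descending intersection of row-submodules equals $\{ w\in (\Z_p^r)^* : w\chi_A^1(A) = 0\}$. The key observation is that the right action of $A$ on rows is the transpose of the left action, so I would apply Lemma \ref{LEM:module-properties} to the transpose $A^t$ acting on $M = \Z_p^r$ (note $\chi_{A^t} = \chi_A$, so the unit and ideal factors coincide). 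This gives a direct-sum decomposition $\Z_p^r = M^1(A^t)\oplus M^0(A^t)$, and I would identify rows $w\in(\Z_p^r)^*$ with column vectors $w^t\in\Z_p^r$, under which the subspace $\{ w : w\chi_A^1(A)=0\}$ becomes exactly $M^1(A^t) = \{ u\in\Z_p^r : \chi_A^1(A^t)u = 0\}$, since $(w\chi_A^1(A))^t = \chi_A^1(A^t)w^t$.

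First I would establish the inclusion $M^1(A^t)^t \subseteq \bigcap_n (\Z_p^r)^* A^n$. By Lemma \ref{LEM:module-properties}(2), $A^t$ restricts to an isometric \emph{isomorphism} of $M^1(A^t)$, so $M^1(A^t) = (A^t)^n M^1(A^t)$ for every $n$; transposing, every $w\in M^1(A^t)^t$ lies in $M^1(A^t)^t A^n \subseteq (\Z_p^r)^* A^n$, giving membership in the intersection. For the reverse inclusion, I would take $w\in\bigcap_n(\Z_p^r)^*A^n$ and decompose $w^t = u^1 + u^0$ with $u^1\in M^1(A^t)$, $u^0\in M^0(A^t)$, and show $u^0 = 0$. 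For each $n$ write $w^t = (A^t)^n x_n$ with $x_n\in\Z_p^r$; projecting onto the ideal summand (which is $A^t$-invariant) gives $u^0 = (A^t)^n (x_n)^0$ for every $n$. Since Lemma \ref{LEM:module-properties}(3) makes $(A^t)^n$ drive the entire module into $\pi^n M$ for $n$ large, I get $u^0 \in \bigcap_n \pi^n M^0(A^t) = \{0\}$, forcing $u^0 = 0$ and hence $w^t\in M^1(A^t)$.

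The main obstacle I anticipate is handling the two summands asymmetrically and keeping the transpose bookkeeping clean: the unit part survives because $A^t$ is invertible there (so preimages under $(A^t)^n$ stay in the module and the condition $w\in(\Z_p^r)^*A^n$ is automatic), while the ideal part is annihilated because $(A^t)^n$ is eventually divisible by arbitrarily high powers of $\pi$. I would need to be slightly careful that $M^1(A^t)$ and $M^0(A^t)$ are genuine $\Z_p$-summands whose coordinates I can reason about via $\|\cdot\|_\pi$, but Lemma \ref{LEM:module-properties}(1) provides exactly this direct-sum structure over $R = \Z_p$. Once the two inclusions are in place, combined with the identification $\{ w : w\chi_A^1(A) = 0\} = M^1(A^t)^t$, the theorem follows.
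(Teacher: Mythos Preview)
Your proposal is correct and follows essentially the same approach as the paper: both reduce to $G^{*p}=\bigcap_n(\Z_p^r)^*A^n$ via Proposition~\ref{PROP:limit-functionals}, invoke the direct-sum decomposition of Lemma~\ref{LEM:module-properties}, use part~(2) for the inclusion $M^1\subseteq\bigcap_n(\Z_p^r)^*A^n$, and use part~(3) to kill the $M^0$-component for the reverse inclusion. The only cosmetic difference is that the paper applies Lemma~\ref{LEM:module-properties} directly to the row module $(\Z_p^r)^*$ under the right action of $A$, whereas you transpose and work with $A^t$ on columns; your contradiction is phrased as $u^0\in\bigcap_N p^N M^0=\{0\}$ while the paper fixes $\|v^0\|_p=p^{-N}$ and finds a single $k$ with $v^0\notin(\Z_p^r)^*A^k$, but these are equivalent (just be careful that part~(3) gives, for each $N$, some $k$ with $(A^t)^k(M^0)\subseteq p^N M^0$, not literally $(A^t)^n(M)\subseteq p^n M$).
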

\begin{proof}
Let $M^1$ and $M^0$ denote the unit and ideal submodules ,respectively, of $(\Z_p^r)^*$ associated to $A$.  
Then the theorem states that $G^{*p} = M^1$.  

By Proposition \ref{PROP:limit-functionals}, $G^{*p} = \bigcap_{n=0}^\infty (\Z_p^r)^* A^n$.  
By part (2) of Lemma \ref{LEM:module-properties}, $A|_{M^1}$ is an isometric isomorphism, from which it easily follows that $M^1\subseteq \bigcap_{n=0}^\infty (\Z_p^r)^* A^n$.  

Now choose $v\in \bigcap_{n=0}^\infty (\Z_p^r)^*A^n$.  
By part (1) of Lemma \ref{LEM:module-properties}, we can write $v = v^1 + v^0$ with $v^1\in M^1$ and $v^0\in M^0$; let us suppose for a contradiction that $v^0\neq 0$.  
Then $\| v^0\|_p > 0$---say $\| v^0\|_p = p^{-N}$.  
But by part (3) of Lemma \ref{LEM:module-properties}, we can find $k\in\N$ such that $wA^k\in p^{N+1}(\Z_p^r)^*$ for all $w\in (\Z_p^r)^*$---i.e., $\|wA^k\|_p \leq p^{-N-1}$.  
This implies that $v^0\notin (\Z_p^r)^*A^k$, which is a contradiction, as $v, v^1\in (\Z_p^r)^*A^k$.  
\end{proof}

The divisibility pseudometric $d_p$ on $G$ was introduced in \cite[Definition 3.2]{M:functionals}; it is defined by $d_p(g,h) = p^{-n}$, where $n$ is maximal with the property that $g-h$ is $p^n$-divisible in $G$, or $d_p(g,h) = 0$ if there is no such maximal $n$.  
The following proposition clarifies the role of the right unit subspace: specifically, the magnitude of the projection of a group element $g$ onto the right unit subspace is equal to $d_p(g,0)$.  
\begin{prop}\label{PROP:norm-decomposition}
Let $p$ be a prime, let $A\in M_{r\times r}(\Z)$ be a non-singular matrix, let $\chi_A(x)$ be the characteristic polynomial of $A$, and let $G = \bigcup_{n=0}^\infty A^{-n}(\Z^r)$.  
Every $g\in G$ can be written $g = g^1 + g^0$ with $g^1$ and $g^0$ in the unit and ideal subspaces of $\Q_p^r$ associated to $A$, and moreover $d_p(g,0) = \| g^1\|_p$, where $d_p$ is the divisibility pseudometric on $G$.  
\end{prop}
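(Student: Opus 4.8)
The existence of the decomposition is immediate: taking $R=\Z_p$ in Lemma~\ref{LEM:vector-space-properties}, part~(1) gives $\Q_p^r = V^1(A)\oplus V^0(A)$, and since $g\in G\subseteq\Q^r\subseteq\Q_p^r$ we may write $g = g^1 + g^0$ uniquely with $g^1\in V^1(A)$ and $g^0\in V^0(A)$. The substance of the proposition is the identity $d_p(g,0)=\|g^1\|_p$, and my plan is to reduce this to the $p$-adic sizes of the vectors $A^m g$ for large $m$. The link is the following reformulation of divisibility, already implicit in the proof of Proposition~\ref{PROP:p-divisible}: writing $g = A^{-n_0}g_0$ with $g_0\in\Z^r$, one has $A^m g\in\Z^r$ for every $m\geq n_0$, and for such $m$ the condition $A^m g\in p^k\Z^r$ is equivalent to $\|A^m g\|_p\leq p^{-k}$. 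Hence $g$ is $p^k$-divisible in $G$ exactly when $\|A^m g\|_p\leq p^{-k}$ for some $m$, and $d_p(g,0)=p^{-n}$ with $n$ the largest such $k$ (or $d_p(g,0)=0$ when every $k$ works).

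The heart of the argument is that the splitting $\Q_p^r = V^1(A)\oplus V^0(A)$ is orthonormal for the sup-norm, i.e. $\|v\|_p = \max\{\|v^1\|_p,\|v^0\|_p\}$ whenever $v = v^1+v^0$. To see this I would invoke part~(1) of Lemma~\ref{LEM:module-properties}, which gives the internal direct sum $\Z_p^r = M^1(A)\oplus M^0(A)$ of free $\Z_p$-modules. Concatenating a $\Z_p$-basis of $M^1(A)$ with one of $M^0(A)$ yields a $\Z_p$-basis of the whole lattice $\Z_p^r$, so the change-of-basis matrix lies in $GL_r(\Z_p)$ and therefore preserves the sup-norm; in this adapted basis the sup-norm of a vector is visibly the maximum of the sup-norms of its two components.

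With orthonormality in hand the computation is routine. By part~(2) of Lemma~\ref{LEM:vector-space-properties}, $A|_{V^1(A)}$ is an isometry, so $\|A^m g^1\|_p = \|g^1\|_p$ for every $m$. By part~(3) of Lemma~\ref{LEM:module-properties} some power of $A$ carries $M^0(A)$ into $p\Z_p^r$, and iterating this shows $\|A^m g^0\|_p\to 0$ (here one writes $g^0 = A^{-n_0}g_0^0$ with $g_0^0\in M^0(A)$). Orthonormality then gives $\|A^m g\|_p = \max\{\|g^1\|_p,\|A^m g^0\|_p\}$, whence $\inf_m\|A^m g\|_p = \|g^1\|_p$, attained for all sufficiently large $m$ once $g^1\neq 0$. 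Feeding this into the divisibility reformulation yields the claim: when $g^1\neq 0$ the largest admissible $k$ is exactly $-\log_p\|g^1\|_p$, so $d_p(g,0)=\|g^1\|_p$, and when $g^1=0$ the sizes $\|A^m g\|_p$ tend to $0$, so $g$ is $p^k$-divisible for every $k$ and $d_p(g,0)=0=\|g^1\|_p$.

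One point to verify for consistency is that $\|g^1\|_p\leq 1$, so that the exponent $n$ produced above is non-negative; this holds because $g^1 = A^{-n_0}g_0^1$, where $g_0^1\in M^1(A)\subseteq\Z_p^r$ is the unit component of the integral vector $g_0$, and $A^{-n_0}$ is again an isometry on $V^1(A)$. I expect the orthonormality step to be the main obstacle: the abstract direct-sum decomposition of $\Q_p^r$ carries no metric information on its own, and it is only the integral refinement $\Z_p^r = M^1(A)\oplus M^0(A)$ supplied by Lemma~\ref{LEM:module-properties} that forces the splitting to respect the sup-norm. Everything else is bookkeeping with the isometry and contraction properties of $A$ on the two pieces.
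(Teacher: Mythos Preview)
Your proof is correct and follows essentially the same route as the paper: both arguments rest on the reformulation of $p^k$-divisibility as $\|A^m g\|_p\le p^{-k}$ for some (equivalently, all large) $m$, together with the isometry of $A$ on $V^1(A)$ and the contraction of $A$ on $V^0(A)$.  The one noteworthy difference is that you take the extra step of proving the orthonormality identity $\|v\|_p=\max\{\|v^1\|_p,\|v^0\|_p\}$ via the integral splitting $\Z_p^r=M^1(A)\oplus M^0(A)$ and a $GL_r(\Z_p)$ change of basis, and then read off $\|A^m g\|_p$ exactly; the paper instead gets by with only the ultrametric inequality, applied once to $A^n g=A^n g^1+A^n g^0$ and once to $A^n g^1=A^n g-A^n g^0$, to bound $d_p(g,0)$ and $\|g^1\|_p$ against each other.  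Your orthonormality observation is a genuine (and correct) strengthening that streamlines the bookkeeping, while the paper's version is marginally more elementary in that it never needs to name a basis adapted to the splitting.
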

\begin{proof}
Note that the statement that $g\in p^kG$ means there exists $n\in \N$ such that $A^{n}g\in p^k\Z^r$---in other words, $\| A^{n}g\|_p \leq p^{-k}$.  
There exists $N\in \N$ such that $\| A^{m}g^0\|_p < p^{-k}$ for all $m\geq N$.  
Thus if $n\geq N$ then 
\begin{align*}
\| A^{n} g\|_p & \leq \max \{ \| A^{n}g^1\|_p,\| A^{n}g^0\|_p\} \\ 
 & \leq \max \{ \| A^{n}g^1\|_p,p^{-k}\} = \max \{ \| g^1\|_p,p^{-k}\},  
\end{align*}
where the last equality comes from part (2) of Lemma \ref{LEM:module-properties}.  
So if $\| g^1\|_p \leq p^{-k}$, then $d_p(g,0)\leq p^{-k}$.  
This shows that $d_p(g,0)\leq \| g^1\|_p$.  

On the other hand, if $d_p(g,0) \leq p^{-k}$, then the above argument shows that $\| A^{n}g\|_p \leq p^{-k}$, and hence 
\begin{align*}
\| g^1\|_p & = \| A^{n}g^1\|_p \\ 
 & \leq \max \{ \| A^{n}g\|_p, \|-A^{n}g^0\|_p\} \\ 
 & \leq \max \{ p^{-k},p^{-k}\} = p^{-k},
\end{align*}
where the first inequality is the strong triangle inequality, and the second inequality holds if $n$ is large enough that $\| A^{n}g^0\|_p \leq p^{-k}$.  
Thus $\|g^1\|_p \leq d_p(g,0)$.  
\end{proof}
Note that Proposition \ref{PROP:norm-decomposition} implies that $\| g^1\|_p\leq 1$ for all $g\in G$.  

\begin{example}\label{EX:dugas}
Let us illustrate the usefulness of the theory of $p$-adic functionals with an example.  
In \cite{D:stationary}, Dugas gave an example of a torsion-free abelian group that is a stationary limit of a matrix with irreducible characteristic polynomial, but is not strongly indecomposable.  
Specifically, consider the algebraic number $\lambda = \sqrt{2} + i\in \C$.  
Dugas showed that the group $G = \mathcal{O}_\lambda \big[ \frac{1}{\lambda}\big]$ (see Definition \ref{DEF:ring-extension} for the notation) is not strongly indecomposable.  
This fact manifests itself clearly in the factored form \cite[Definition 5.7]{M:functionals} (a variant of the Kurosch matrices \cite{K:matrices}) of $G$, which is computed from the $p$-adic functionals.  

The minimal polynomial of $\lambda$ is $\chi(x) = x^4-2x^2+9$.  
Let $A$ denote the companion matrix of $\chi(x)$:
\[
A = \left[ \begin{array}{rrrr} 0 & 0 & 0 & -9 \\ 
1 & 0 & 0 & 0 \\
0 & 1 & 0 & 2 \\
0 & 0 & 1 & 0 \end{array}\right],
\]
and consider the group $G = \bigcup_{n=0}^\infty A^{-n}(\Z^4)$.  
$\chi(x)$ is the characteristic polynomial of $A$, and by Lemma \ref{LEM:root-count} it has exactly two roots---let us call them $\lambda_1$ and $\lambda_2$---that satisfy $|\lambda_i|_\pi < 1$, where $|\cdot |_\pi$ is the extension of the $3$-adic absolute value to the splitting field of $\chi(x)$.  
Then, for $i = 1,2$, $(\lambda_i^3-2\lambda_i, \lambda_i^2-2,\lambda_i,1)^t$, is a right eigenvector of $A$ associated to $\lambda_i$.  

If $\mu$ is any root of $\chi(x)$, then $\mu^2$ is a root of $x^2 - 2x + 9$, which has exactly one root $\nu$ with $|\nu |_\pi < 1$.  
Hence $\lambda_1^2 = \lambda_2^2 = \nu$, and moreover this common value lies in $\Q_3$, and hence $\Z_3$, by the Newton Polygon Theorem.  
Let $\alpha = \lambda_i^2 - 2\in \Z_3$.  
Then, for $i = 1,2$, $(\lambda_i\alpha, \alpha,\lambda_i,1)^t$, is a right eigenvector of $A$ associated to $\lambda_i$.  

The $3$-adic functionals on $G$ are exactly the rows in $(\Z_3^4)^*$ that have a product of $0$ with both of these eigenvectors.  
Thus $G^{*3}$ is spanned by $(0,1,0,-\alpha)$ and $(1,0,-\alpha,0)$.  
For all other primes $p$, $G^{*p}$ is all of $(\Z_p^4)^*$.  
But $G = \{ v\in \Q^4 : $ for all primes $p, wv \in\Z_p $ for all $w\in G^{*p}\}$.  
Inspection of the rows in $G^{*p}$ reveals that, if $(a,b,c,d)^t\in G$, then $(a,0,c,0)^t,(0,b,0,d)^t\in G$; therefore $G$ is the direct sum of $G\cap \Q\oplus 0 \oplus \Q \oplus 0$ and $G\cap 0\oplus \Q \oplus 0 \oplus \Q$.  
\end{example}

\section{Isomorphism and quasi-isomorphism}\label{SEC:isomorphism}

In \cite{D:stationary}, Dugas gave an explicit description of what a finite-rank torsion-free abelian group $G$ must look like, up to quasi-isomorphism (Definition \ref{DEF:quasi-isomorphic}), if it is quasi-isomorphic to a stationary group.  
This result can be strengthened using results from \cite{M:stationary}, which imply in particular that if $G$ is quasi-isomorphic to a stationary inductive limit, then $G$ is isomorphic to a stationary inductive limit.  
\cite{M:stationary} deals with stationary inductive limits in the category of ordered groups, and much of the content of that work has to do with the order structure.  
Therefore a drastically simplified version, which deals only with the group structure and ignores the order structure, is presented here.  

The result of Dugas is expressed in terms of the following two definitions.  

\begin{defn}\label{DEF:quasi-isomorphic}
Let $G$ and $H$ be finite-rank torsion-free abelian groups.  
$G$ and $H$ are called \defemph{quasi-isomorphic} if there exist $n\in \N$ and homomorphisms $\alpha : G\to H$ and $\beta : H\to G$ such that $\alpha\circ \beta = n\cdot id_H$ and $\beta\circ\alpha = n\cdot id_G$.  
\end{defn}

\begin{defn}\label{DEF:ring-extension}
Let $0\neq \lambda\in \C$ be an algebraic integer and let $\mathcal{O}_\lambda$ denote the ring of integers of the field extension $\Q [\lambda]$ of $\Q$ by $\lambda$.  
Then define $L_\lambda$ to be the additive group of 
\[
\mathcal{O}_\lambda \big[\frac{1}{\lambda}\big] = \big\{ \frac{r}{\lambda^n} : r\in \mathcal{O}_\lambda, n\in\N\big\};
\]
that is, $L_\lambda$ is the additive group of the ring extension of $\mathcal{O}_\lambda$ by $1/\lambda$.  
\end{defn}

The following is the main result of \cite{D:stationary}, by Dugas.  
\begin{thm}\cite[Theorem 1]{D:stationary}\label{THM:dugas}
Let $G$ be a finite-rank torsion-free abelian group.  
The following are equivalent.  
\begin{enumerate}
\item  $G$ is quasi-isomorphic to the inductive limit of a stationary inductive system of finite-rank free abelian groups.  
\item  There exists a finite set $\Sigma$ of algebraic integers in $\C$ such that $G$ is quasi-isomorphic to a finite direct sum of copies of $L_\lambda$, $\lambda\in \Sigma$.  
\end{enumerate}
\end{thm}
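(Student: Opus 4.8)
The plan is to prove the two implications separately, the reverse one being routine and the forward one carrying all of the difficulty.

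For $(2)\Rightarrow(1)$, I would first note that each $L_\lambda$ is itself a stationary limit: fixing a $\Z$-basis of $\mathcal{O}_\lambda\cong\Z^f$, where $f$ is the degree of $\lambda$ over $\Q$, multiplication by the algebraic integer $\lambda$ is an injective endomorphism $M_\lambda$ of $\Z^f$ with $\det M_\lambda = N(\lambda)\neq 0$, and $L_\lambda=\bigcup_n M_\lambda^{-n}(\Z^f)$. A finite direct sum $\bigoplus_{\lambda\in\Sigma}L_\lambda^{\,m_\lambda}$ is then the stationary limit of the block-diagonal matrix assembled from the $M_\lambda$. Since quasi-isomorphism is transitive, a group quasi-isomorphic to such a sum is quasi-isomorphic to a stationary limit.

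For $(1)\Rightarrow(2)$ it suffices, again by transitivity, to treat an honest stationary limit $H=\bigcup_n A^{-n}(\Z^r)$ with $A$ non-singular. The minimal polynomial $m_A$ is monic and, by Gauss's lemma applied to its division of the monic integral $\chi_A$, lies in $\Z[x]$; factor it as $\prod_i q_i^{e_i}$ with each $q_i$ irreducible over $\Q$, so that every root of every $q_i$ is an algebraic integer. The primary decomposition of $\Q^r$ as a $\Q[A]$-module gives an $A$-invariant splitting $\Q^r=\bigoplus_i V_i$ whose idempotent projections $\pi_i$ lie in $\Q[A]$. Clearing denominators, some integer multiple $N\pi_i$ is an integer polynomial in $A$ and hence carries $H$ into $H\cap V_i$; the maps $h\mapsto(N\pi_i h)_i$ and $(h_i)_i\mapsto\sum_i h_i$ then exhibit $H$ as quasi-isomorphic to $\bigoplus_i(H\cap V_i)$, and each $H\cap V_i=\bigcup_n(A|_{V_i})^{-n}(\Z^r\cap V_i)$ is a stationary limit on which $A$ acts with minimal polynomial $q_i^{e_i}$. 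This reduces the problem to a single primary block. In the semisimple sub-case $e_i=1$ the acting ring $\Q[A|_{V_i}]$ is the field $\Q(\lambda)$, and $H\cap V_i$ is a finitely generated torsion-free module over the order $\Z[\lambda,\lambda^{-1}]$. Passing to its integral closure $S=\mathcal{O}_\lambda[1/\lambda]$ alters the module only within a commensurability class, since the conductor is a nonzero integer, and so preserves the quasi-isomorphism type; over the Dedekind domain $S$ the module has the form $S^{m-1}\oplus\mathfrak{a}$, and every nonzero fractional ideal is commensurable with $S=L_\lambda$, whence $H\cap V_i\sim L_\lambda^{\,m}$ with $m=\dim_{\Q(\lambda)}V_i$.

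The main obstacle is the non-semisimple sub-case $e_i\geq 2$, where $\Q[A|_{V_i}]=\Q[x]/(q_i^{e_i})$ is a non-reduced local Artinian ring and the Dedekind argument no longer applies. Here I would filter $H\cap V_i$ by the pure subgroups $H\cap\ker q_i(A)^j$, whose successive quotients are semisimple blocks already handled above, and then argue that this filtration quasi-splits, equivalently that the nilpotent (Jordan) part of $A$ does not alter the quasi-isomorphism type of the underlying group. This is precisely what occurs in the smallest example, $A=\bigl(\begin{smallmatrix}2&1\\0&2\end{smallmatrix}\bigr)$, where a direct computation of $A^{-n}$ confines all denominators to powers of $2$ and yields $H\cong\Z[1/2]^2=L_2^{\,2}$. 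To prove quasi-splitting in general I would work one prime at a time: over $\Q_p$ the factor $q_i$ splits further, and $V_i\otimes\Q_p$ decomposes under idempotents according to which roots are $p$-adic units, so that by Theorems \ref{THM:tensor} and \ref{THM:stationary-states} the quasi-isomorphism invariant at $p$, namely the corank contributed by the unit-root factor, depends only on this unit/non-unit data and not on the nilpotent structure. Matching this data at every prime against that of $L_\lambda^{\,m}$ then gives the required commensurability. Establishing this local insensitivity to the nilpotent part rigorously is the crux of the whole argument.
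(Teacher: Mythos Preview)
First, note that the paper does not actually prove this theorem: it is quoted verbatim as \cite[Theorem~1]{D:stationary} and used as input for the paper's own Theorem~\ref{THM:quasi-isomorphism}. So there is no ``paper's own proof'' to compare against; any assessment has to be on the internal merits of your sketch.

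Your argument for $(2)\Rightarrow(1)$ and your reduction of $(1)\Rightarrow(2)$ to a single primary block are standard and fine, as is the semisimple case via the Dedekind structure theorem for $\mathcal{O}_\lambda[1/\lambda]$-modules.

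The genuine gap is in the non-semisimple case, and it lies precisely in the sentence ``Matching this data at every prime against that of $L_\lambda^{\,m}$ then gives the required commensurability.'' What Theorems~\ref{THM:tensor} and~\ref{THM:stationary-states} give you at each prime $p$ is the $\Z_p$-rank of the reduced part of $G\otimes\Z_p$, i.e., the single integer $\deg\chi_A^1$. Knowing this integer for every $p$ is very far from a complete quasi-isomorphism invariant of a finite-rank torsion-free abelian group: groups in the same genus (same localisation at every prime) need not be quasi-isomorphic, and you are retaining even less than the genus. So the local insensitivity you prove is true but does not do the work you ask of it; you still need a \emph{global} reason why the Jordan filtration quasi-splits.

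A more promising route is already visible in your $2\times 2$ example. For a single Jordan block with eigenvalue $\lambda$, the entries of $A^{-n}$ are polynomials in $n$ times powers of $\lambda^{-1}$, so every denominator that appears lies in $\Z[1/N(\lambda)]$; this shows directly that the stationary limit is sandwiched between two copies of $L_\lambda^{\,m}$ and hence commensurable with it, without any appeal to local data. Globalising that computation (over the order $\Z[\lambda]$, then passing to $\mathcal{O}_\lambda$ via the conductor as you already do in the semisimple case) is what actually closes the non-semisimple case, and is essentially how Dugas proceeds.
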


The following results from \cite{M:stationary} make it possible to replace ``quasi-isomorphic'' in statement (1) of Theorem \ref{THM:dugas} with ``isomorphic.''  
\begin{lemma}\cite[Corollary 6.10]{M:stationary}\label{LEM:matrix-power-difference}
Let $B$ be a square integer matrix and let $m\geq 2$ be an integer.
Then there exist integers $k > l\geq 0$ such that every entry of the matrix $B^k - B^l$ is divisible by $m$.
\end{lemma}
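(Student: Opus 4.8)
The plan is to reduce everything modulo $m$ and apply the pigeonhole principle to the powers of $B$ inside a finite matrix ring. Write $r$ for the size of $B$, so that $B\in M_{r\times r}(\Z)$. Entrywise reduction modulo $m$ is a ring homomorphism $M_{r\times r}(\Z)\to M_{r\times r}(\Z/m\Z)$; denote the image of $B$ by $\bar{B}$. Because this reduction is a ring homomorphism, it commutes with multiplication, so $\overline{B^n}=\bar{B}^{\,n}$ for every $n\geq 0$. The essential observation is that the target $M_{r\times r}(\Z/m\Z)$ is a \emph{finite} ring---it has exactly $m^{r^2}$ elements---so the infinite list of powers $\bar{B}^{\,0},\bar{B}^{\,1},\bar{B}^{\,2},\ldots$ cannot be made up of pairwise distinct matrices.

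First I would record that the infinitely many matrices $\bar{B}^{\,0},\bar{B}^{\,1},\bar{B}^{\,2},\ldots$ all lie in the finite set $M_{r\times r}(\Z/m\Z)$. By the pigeonhole principle there exist indices $k>l\geq 0$ with $\bar{B}^{\,k}=\bar{B}^{\,l}$. Unwinding the reduction map, this equality says precisely that every entry of $B^k-B^l$ reduces to $0$ in $\Z/m\Z$, i.e.\ is divisible by $m$; this is exactly the desired conclusion. The hypothesis $m\geq 2$ is used only to make $\Z/m\Z$ nontrivial (for $m=1$ the statement is vacuous), and the allowance $l=0$ causes no trouble, since one simply takes the first repetition in the sequence.

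Since the argument is a direct finiteness/pigeonhole argument, I do not expect any genuine obstacle. The only point that warrants a moment's care is the verification that raising to a power commutes with reduction modulo $m$, i.e.\ that $\overline{B^n}=\bar{B}^{\,n}$; this is immediate from the fact that entrywise reduction is a ring homomorphism on $M_{r\times r}(\Z)$. With that in hand, the bound $m^{r^2}$ on the size of the target ring guarantees a collision among the powers, and the lemma follows at once.
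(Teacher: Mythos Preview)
Your argument is correct: reducing modulo $m$ sends $B$ into the finite ring $M_{r\times r}(\Z/m\Z)$, the reduction is a ring homomorphism so powers are preserved, and the pigeonhole principle forces a collision $\bar{B}^{\,k}=\bar{B}^{\,l}$ with $k>l\geq 0$, which unwinds to $m\mid B^k-B^l$ entrywise. There is nothing to compare against in the present paper, since the lemma is quoted without proof from \cite[Corollary~6.10]{M:stationary}; your pigeonhole argument is in any case the standard and most direct route to this fact.
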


\begin{lemma}\cite[Proposition 5.4]{M:stationary}\label{LEM:increasing-subgroup}
Let $G$ be a torsion-free abelian group.  
Then $G$ is stationary if and only there exist a finitely-generated free abelian subgroup $F\subseteq G$ and an automorphism $\alpha: G\to G$ such that $F\subseteq \alpha(F)$ and $G = \bigcup_{n=0}^\infty \alpha^{n}(F)$.  
\end{lemma}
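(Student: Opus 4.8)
The plan is to prove the two implications separately, in each case exhibiting the natural candidate for the pair $(F,\alpha)$ promised by the lemma. The guiding principle is that, when $G$ is realized concretely as $\bigcup_{n=0}^\infty A^{-n}(\Z^r)\subseteq\Q^r$, the subgroup $F$ should be $\Z^r$ and the automorphism $\alpha$ should be $A^{-1}$. Once this is recognized, both directions reduce to routine bookkeeping, and the real content is the observation that applying $A^{-1}$ repeatedly to $\Z^r$ produces exactly the stationary tower whose union is $G$.

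For the forward implication, suppose $G$ is stationary, so $G\cong\varinjlim(\Z^r,A)$ for a single endomorphism $A$. First I would invoke the reduction (as in the discussion preceding Subsection \ref{SUBSEC:p-divisibility}, citing \cite{H:irrational} and \cite{M:stationary}) that lets us assume without loss of generality that $A$ is non-singular; then, as in Section \ref{SEC:inductive-limit}, $G$ may be identified with $\bigcup_{n=0}^\infty A^{-n}(\Z^r)$. Set $F=\Z^r$ (the $n=0$ term) and $\alpha=A^{-1}$, viewed as a $\Q$-linear automorphism of $\Q^r$. The crucial point is that $\alpha$ genuinely restricts to an automorphism of $G$: since $A$ has integer entries we have $A(\Z^r)\subseteq\Z^r$, hence $\Z^r\subseteq A^{-1}(\Z^r)$, and therefore $A^{-1}(G)=\bigcup_{n\geq 1}A^{-n}(\Z^r)=G$. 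The same inclusion $\Z^r\subseteq A^{-1}(\Z^r)$ gives $F\subseteq\alpha(F)$, and $\bigcup_{n=0}^\infty\alpha^n(F)=\bigcup_{n=0}^\infty A^{-n}(\Z^r)=G$, as required.

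For the reverse implication, suppose $F\subseteq G$ is free of rank $r$ and $\alpha$ is an automorphism of $G$ with $F\subseteq\alpha(F)$ and $G=\bigcup_{n=0}^\infty\alpha^n(F)$. Applying $\alpha^{-1}$ to $F\subseteq\alpha(F)$ yields $\alpha^{-1}(F)\subseteq F$, so $A:=\alpha^{-1}|_F$ is a well-defined endomorphism of $F\cong\Z^r$, injective because $\alpha^{-1}$ is. I would then check that the maps $\phi_n:=\alpha^n|_F:F\to G$ form a compatible family for the stationary system with transition map $A$ at every stage: indeed $\phi_{n+1}\circ A=\alpha^{n+1}\circ\alpha^{-1}|_F=\alpha^n|_F=\phi_n$. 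Since each $\phi_n$ is injective and $\bigcup_{n}\phi_n(F)=\bigcup_n\alpha^n(F)=G$, these maps induce an isomorphism $\varinjlim(\Z^r,A)\cong G$. Because the transition homomorphism is the single map $A$ at every stage, $G$ is stationary by Definition \ref{DEF:stationary}.

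The argument is short, so there is no single hard step; the points that require care are (i) the non-singularity reduction in the forward direction, which is needed to pass to the concrete model $\bigcup_{n=0}^\infty A^{-n}(\Z^r)$ and which must be imported from \cite{H:irrational} and \cite{M:stationary}, and (ii) verifying that $\alpha=A^{-1}$ actually preserves $G$, which rests entirely on $A$ having integer entries, equivalently on the inclusion $F\subseteq\alpha(F)$. In the reverse direction the only thing to confirm is that the transition maps are literally the same endomorphism at each stage, which is precisely what makes the resulting inductive system stationary rather than merely an arbitrary increasing chain of free subgroups.
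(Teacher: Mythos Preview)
The paper does not actually prove this lemma; it is quoted without proof from \cite[Proposition 5.4]{M:stationary}, so there is no argument here to compare against. Your proof is correct and is the natural one: in the forward direction you identify $G$ with $\bigcup_{n\geq 0}A^{-n}(\Z^r)$ after the non-singularity reduction and take $(F,\alpha)=(\Z^r,A^{-1})$, and in the reverse direction you set $A=\alpha^{-1}|_F$ and verify that the maps $\alpha^n|_F$ assemble into an isomorphism $\varinjlim(F,A)\cong G$. Both verifications are carried out accurately; in particular, your check that $A^{-1}$ restricts to an automorphism of $G$ (via $A(\Z^r)\subseteq\Z^r$ and the nesting of the chain) and your injectivity argument for the induced map on the limit are sound.
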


The following lemma is a consequence of \cite[Lemma 6.12]{M:stationary}.  
\begin{lemma}\label{LEM:add-brace}
Let $G$ be a stationary torsion-free abelian group and choose an arbitrary $z\in G\otimes \Q$.  
Then the subgroup of $G\otimes \Q$ generated by $G$ and $z$ is stationary.  
\end{lemma}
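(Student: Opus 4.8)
The plan is to use the characterization of stationarity in Lemma \ref{LEM:increasing-subgroup}: writing $H$ for the subgroup of $G\otimes\Q$ generated by $G$ and $z$, it suffices to produce a finitely generated free subgroup $F'\subseteq H$ and an automorphism $\beta$ of $H$ with $F'\subseteq\beta(F')$ and $H=\bigcup_{n\geq 0}\beta^n(F')$. First I would fix a concrete setup. Since $G$ is torsion-free of finite rank, $G\otimes\Q$ is its divisible hull, so $z=g/N$ for some $g\in G$ and $N\in\N$; if $N=1$ then $z\in G$ and $H=G$ is already stationary, so assume $N\geq 2$. Applying Lemma \ref{LEM:increasing-subgroup} to $G$ gives a finitely generated free $F\subseteq G$ and an automorphism $\alpha$ of $G$ with $F\subseteq\alpha(F)$ and $G=\bigcup_n\alpha^n(F)$; after replacing $F$ by $\alpha^m(F)$ for a suitable $m$ (which alters neither $\alpha$ nor the hypotheses of Lemma \ref{LEM:increasing-subgroup}) I may assume $g\in F$.

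The naive idea---to restrict $\alpha$ itself to $H$---fails, and this failure is the main obstacle. Indeed, $\alpha$ stabilizes $G$, but $\alpha(z)=\alpha(g)/N$ need not lie in $H=G+\Z z$, because the automorphism that $\alpha$ induces on the torsion group $(G\otimes\Q)/G$ need not preserve the cyclic subgroup $H/G$ generated by the class of $z$. The remedy is to replace $\alpha$ by a power $\beta=\alpha^d$ that fixes the class of $z$ modulo $G$. To find $d$ I would use Lemma \ref{LEM:matrix-power-difference}: letting $A\in M_{r\times r}(\Z)$ be the matrix of $\alpha^{-1}|_F$ (integral because $F\subseteq\alpha(F)$ forces $\alpha^{-1}(F)\subseteq F$), the lemma with $m=N$ produces $k>l\geq 0$ with $A^k\equiv A^l\pmod N$; set $d=k-l$. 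Since $\alpha^{-j}(g)=A^j g$ for $g\in F$, this congruence gives $A^k g\equiv A^l g\pmod{NF}$, and applying the automorphism $\alpha^k$ (which preserves $NG$) yields $\alpha^d(g)-g\in NG$. Therefore $\beta(z)-z=(\alpha^d(g)-g)/N\in G$, i.e.\ $\beta(z)\equiv z\pmod G$.

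With this key congruence in hand the rest is routine. Because $\beta(G)=G$ and $\beta(z)\in z+G$, one gets $\beta(H)=G+\Z\beta(z)=G+\Z z=H$; as $\beta$ is injective on $G\otimes\Q$, it restricts to an automorphism of $H$. For the free subgroup I would take $F'=F+\Z z$, enlarging $F$ once more by a power of $\alpha$ so that $h:=\beta(z)-z\in G$ also lies in $F$ (this changes neither $A$ nor $d$). Then $F\subseteq\alpha^d(F)=\beta(F)$ and $z=(z+h)-h\in\Z\beta(z)+\beta(F)$ give $F'\subseteq\beta(F')$, while $F'$ is finitely generated, torsion-free, and of rank $r$, hence free. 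Finally $\beta^n(F')=\alpha^{nd}(F)+\Z\beta^n(z)\subseteq H$, and since $\bigcup_n\alpha^{nd}(F)=G$ is cofinal in $\{\alpha^j(F)\}$ and $z\in F'$, the union $\bigcup_n\beta^n(F')$ contains $G+\Z z=H$; the reverse inclusion is clear. Lemma \ref{LEM:increasing-subgroup} then shows that $H$ is stationary.

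I expect the conceptual heart to be the second paragraph: recognizing that $\alpha$ must be replaced by a power fixing the class of $z$ modulo $G$, and extracting that power from the eventual periodicity of $A^n\bmod N$ furnished by Lemma \ref{LEM:matrix-power-difference}. The remaining bookkeeping---enlarging $F$ to absorb $g$ and $h$, and verifying the two conditions of Lemma \ref{LEM:increasing-subgroup}---is straightforward once $\beta$ has been constructed.
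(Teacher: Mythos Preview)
Your proof is correct and follows essentially the same route as the paper's: both invoke Lemma \ref{LEM:increasing-subgroup}, apply Lemma \ref{LEM:matrix-power-difference} to the integer matrix of $\alpha^{-1}|_F$ to find a power $\alpha^d$ fixing the class of $z$ modulo $G$, and then check the two conditions of Lemma \ref{LEM:increasing-subgroup} for $F'=\langle F,z\rangle$. The only tactical difference is that you twice enlarge $F$ (to absorb $g$ and then $h=\beta(z)-z$), whereas the paper instead passes to the exponent $k=\operatorname{lcm}(l_1,l_1-l_2)$; your bookkeeping at that step is if anything slightly cleaner.
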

\begin{proof}
By Lemma \ref{LEM:increasing-subgroup}, there exist a finitely-generated free abelian subgroup $F\subseteq G$ and an automorphism $\alpha : G\to G$ such that $F\subseteq \alpha(F)$ and $G = \bigcup_{n=0}^\infty \alpha^{n}(F)$.  
Then $\alpha$ induces an automorphism of $G\otimes \Q$; let us denote this automorphism by $A$.  

Let $G'$ denote the subgroup of $G\otimes \Q$ generated by $G$ and $z$, and let $F' = \langle F \cup \{ z\}\rangle$.  
Then $F'$ is free abelian and finitely generated.  
Let us show that there exists some power $A^k$ of $A$ that restricts to an automorphism of $G'$ with the property that $F'\subseteq A^k(F')$ and $G' = \bigcup_{n=0}^\infty A^{nk}(F')$; by Lemma \ref{LEM:increasing-subgroup}, this will suffice to show that $G'$ is stationary.  

To show that $A^k$ restricts to an automorphism of $G'$, it is enough to show that $A^k(z)-z\in G$---this means that $A^k$ restricts to an endomorphism of $G'$, and then the fact that $A^k$ is an automorphism of $G$ implies that $A^{-k}(z)-z\in G$, which implies that $A^{-k}$ also restricts to an endomorphism of $G'$.  
Choose a basis $g_1,\ldots, g_r$ of $F$.  
$F\subseteq \alpha(F)$, so it is possible to write each $g_i$ as an integer combination of the elements $\alpha(g_j)$.  
Let $B$ be a transition matrix for $A$ with respect to this basis; that is, the entries $B_{ij}$ of $B$ satisfy 
\[
g_i = \sum_{j=1}^r B_{ji} \alpha(g_j)
\]
(note the reversal of the indices).  

There exists $m\in\N$ such that $mz = g\in F$.  
Write $g$ as an integer combination of the basis for $F$: $g = v_1g_1+\cdots + v_rg_r$, and let $v = (v_1,\ldots, v_r)^t\in \Z^r$.  

By Lemma \ref{LEM:matrix-power-difference}, there exist integers $l_1 > l_2 \geq 0$ such that $B^{l_1}-B^{l_2}$ has all entries divisible by $m$.  
The entries of $B^{l_1}v$ express $g$ as an integer combination of $\alpha^{l_1}(g_1),\ldots , \alpha^{l_1}(g_r)$, while the entries of $B^{l_2}v$ express $\alpha^{l_1-l_2}(g)$ as an integer combination of the same elements.  
The fact that $B^{l_1}-B^{l_2}$ has all entries divisible by $m$ implies in particular that $(B^{l_1}-B^{l_2})v$ has all entries divisible by $m$, which implies that $g-\alpha^{l_1-l_2}(g)\in m \alpha^{l_1}(F)\subseteq mG$.  
Thus $z-A^{l_1-l_2}(z) = \frac{1}{m}(g-\alpha^{l_1-l_2}(g)) \in G$, so $A^{l_1-l_2}$ restricts to an automorphism of $G'$.  

Let $k$ be the least common multiple of $l_1$ and $l_1-l_2$.  
Then $F\subseteq A^k(F)$, and the above argument shows that $z\in A^{l_1}(F) + A^{l_1-l_2}(z)\Z\subseteq A^{k}(F')$.  
Thus $F'\subseteq A^k(F')$.  
Moreover, as $A^k$ is an automorphism of $G'$, it is true that $\bigcup_{n=0}^\infty A^{nk}(F')\subseteq G'$; conversely $G = \bigcup_{n=0}^\infty \alpha^n(F) \subseteq \bigcup_{n=0}^\infty A^{nk}(F')$, and $z\in F'\subseteq \bigcup_{n=0}^\infty A^{nk}(F')$.  
Therefore $G' = \bigcup_{n=0}^\infty A^{nk}(F')$, and hence $G'$ is stationary.  
\end{proof}

We now have the necessary ingredients to prove the following theorem, which says that the class of stationary torsion-free abelian groups is closed under quasi-isomorphism.  
\begin{thm}\label{THM:quasi-isomorphism}
Let $G$ be a finite-rank torsion-free abelian group.  
If $G$ is quasi-isomorphic to a stationary torsion-free abelian group, then $G$ is stationary.  
\end{thm}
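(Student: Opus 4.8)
The plan is to use the quasi-isomorphism to realize $G$, up to isomorphism, as a subgroup squeezed between two stationary groups, and then to build it up from the smaller one by adjoining finitely many elements, invoking Lemma~\ref{LEM:add-brace} at each step. First I would translate the quasi-isomorphism into a commensurability statement inside a single $\Q$-vector space. Let $H$ be the stationary group to which $G$ is quasi-isomorphic, and let $\alpha : G\to H$ and $\beta : H\to G$ satisfy $\alpha\circ\beta = n\cdot\mathrm{id}_H$ and $\beta\circ\alpha = n\cdot\mathrm{id}_G$ for some positive integer $n$. Since $G$ and $H$ are torsion-free, $\alpha$ and $\beta$ are injective, so $\alpha$ embeds $G$ isomorphically onto the subgroup $G' := \alpha(G)\subseteq H$. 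Because $\beta(H)\subseteq G$, applying $\alpha$ gives $nH = \alpha(\beta(H))\subseteq \alpha(G) = G'$, while clearly $G'\subseteq H$. Thus $nH\subseteq G'\subseteq H$ with $G'\cong G$; since stationarity is an isomorphism invariant, it suffices to prove $G'$ stationary, and so I may assume from the outset that $nH\subseteq G\subseteq H$ as subgroups of $V := H\otimes\Q$.

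Next I would record two elementary facts and then run an induction. The group $nH$ is stationary, because multiplication by $n$ is an isomorphism from $H$ onto $nH$ (injective by torsion-freeness) and $H$ is stationary; and $G/nH$ is finite, being a subgroup of $H/nH$, which is finite since $H$ has finite rank. Hence I can choose finitely many elements $g_1,\dots,g_s\in G$ whose cosets generate $G/nH$, so that $G = nH + \Z g_1 + \cdots + \Z g_s$. Now set $G_0 := nH$, which is stationary, and for $1\le j\le s$ let $G_j$ be the subgroup of $V$ generated by $G_{j-1}$ and $g_j$. At each step $G_{j-1}$ is stationary and $g_j\in V = G_{j-1}\otimes\Q$ (the rational span is all of $V$ throughout, because $nH\otimes\Q = V = H\otimes\Q$), so Lemma~\ref{LEM:add-brace} shows that $G_j$ is stationary. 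After $s$ steps $G_s = nH + \Z g_1 + \cdots + \Z g_s = G$, and $G$ is stationary, as required.

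The genuinely substantive part of the argument is already packaged in Lemma~\ref{LEM:add-brace}, whose proof uses Lemma~\ref{LEM:matrix-power-difference} to locate a power of the limit automorphism that preserves the enlarged group; granting that lemma, the only real care needed here lies in the reduction of the abstract quasi-isomorphism to the concrete nesting $nH\subseteq G\subseteq H$ inside $V$. Once that commensurability is in hand, the remaining steps are routine: I would merely verify that the hypotheses of Lemma~\ref{LEM:add-brace} are met at each stage --- that every $G_j$ is torsion-free, being a subgroup of $V$, and that its rational span remains $V$ --- so that the finite induction goes through without any new ideas.
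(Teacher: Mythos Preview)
Your proof is correct and follows essentially the same approach as the paper: realize $G$ as a finite extension of a stationary group and then build it up by adjoining finitely many elements via Lemma~\ref{LEM:add-brace}. The only cosmetic difference is the direction of the embedding---you push $G$ into $H$ via $\alpha$ to get $nH\subseteq G'\subseteq H$, whereas the paper pulls $H$ into $G$ via $\beta$ to get $nG\subseteq\beta(H)\subseteq G$.
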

\begin{proof}
Suppose that $G$ is quasi-isomorphic to the stationary torsion-free abelian group $H$; say $\alpha : G\to H$ and $\beta : H\to G$ are group homomorphisms satisfying $\alpha\circ \beta = n\cdot id_H$ and $\beta\circ\alpha = n\cdot id_G$.  
$n\cdot id_G$ is injective, so $\beta$ is injective.  
The image $\beta(H)$ in $G$ is isomorphic to $H$, and hence stationary; moreover, as $n\cdot id_G(G) = nG$, it must be true that $nG \subseteq \beta (H)$.  

As $G$ has finite rank, the quotient $G/nG$ is finite, so $G/\beta(H)$ is finite too; let $z_1,\ldots, z_m\in G$ be a complete set of coset representatives for $G/\beta(H)$.  
Every element $g\in G$ has the property that $ng\in \beta(H)$, so $G$ embeds naturally in $\beta(H)\otimes \Q$, and in fact $G$ is generated by $\beta(H)$ and $z_1,\ldots, z_m$.  
Then, using induction and Lemma \ref{LEM:add-brace}, it follows that $G$ is stationary.  
\end{proof}

\bibliographystyle{abbrv}

\end{document}